\newtheorem{theorem}{Theorem}[section]
\newtheorem{lemma}[theorem]{Lemma}
\newtheorem{corollary}[theorem]{Corollary}
\newtheorem{claim}{Claim}
\newtheorem{definition}{Definition}
\newtheorem{coloring}{Coloring Step}
\tikzset{every loop/.style={}}
\tikzstyle{vertex} = [draw, circle, fill, inner sep=1pt]
\tikzstyle{bndvertex} = [draw, rectangle, minimum width=2pt, minimum height=2pt]
\tikzstyle{essvertex} = [bndvertex, fill=red]
\tikzstyle{essedge} = [line width=1pt, red]
\newcommand{\clique}[1]{\ensuremath{\omega(#1)}}
\newcommand{\Oh}[2][]{{\cal O}_{#1}(#2)}
\newcommand{\N}{\mathbb{N}}
\newcommand{\Z}{\mathbb{Z}}
\newcommand{\R}{\mathbb{R}}
\newcommand{\CC}{\mathscr{C}}
\newcommand{\Rc}{\mathcal{R}}
\newcommand{\Cc}{\mathcal{C}}
\newcommand{\Dc}{\mathcal{D}}
\newcommand{\Ec}{\mathcal{E}}
\newcommand{\Lc}{\mathcal{L}}
\newcommand{\Pc}{\mathcal{P}}
\newcommand{\Sc}{\mathcal{S}}
\newcommand{\Hs}{\mathsf{H}}
\newcommand{\Vs}{\mathsf{V}}
\newcommand{\Ms}{\mathsf{M}}
\def\cqedsymbol{\ifmmode$\lrcorner$\else{\unskip\nobreak\hfil
\penalty50\hskip1em\null\nobreak\hfil$\lrcorner$
\parfillskip=0pt\finalhyphendemerits=0\endgraf}\fi} 
\newcommand{\cqed}{\renewcommand{\qed}{\cqedsymbol}}
\renewcommand{\leq}{\leqslant}
\renewcommand{\geq}{\geqslant}
\begin{document}

\title{Graphs of bounded twin-width\\ are quasi-polynomially $\chi$-bounded\thanks{This work is a part of project BOBR that has received funding from the European Research Council (ERC) under the European Union’s Horizon 2020 research and innovation programme (grant agreement No. 948057).}
}
\author{
Micha\l{} Pilipczuk\thanks{Institute of Informatics, University of Warsaw, Poland (\texttt{michal.pilipczuk@mimuw.edu.pl})} \and Marek Soko\l{}owski\thanks{Institute of Informatics, University of Warsaw, Poland (\texttt{marek.sokolowski@mimuw.edu.pl})}}

\maketitle
\thispagestyle{empty}

\begin{textblock}{20}(-1.9, 8.3)
	\includegraphics[width=40px]{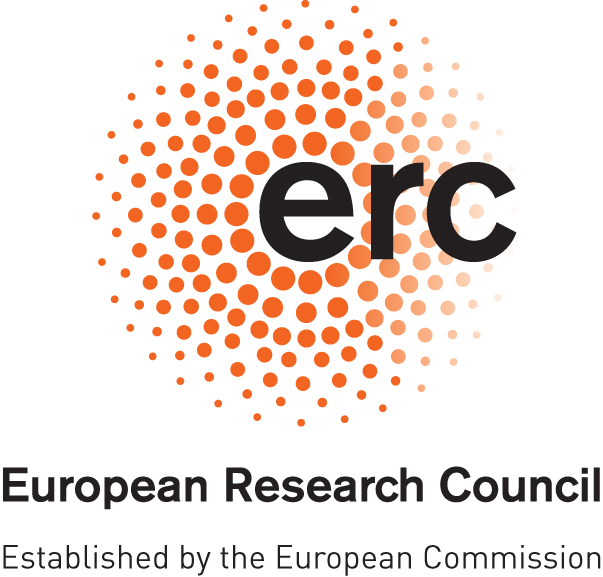}%
\end{textblock}
\begin{textblock}{20}(-2.15, 8.7)
	\includegraphics[width=60px]{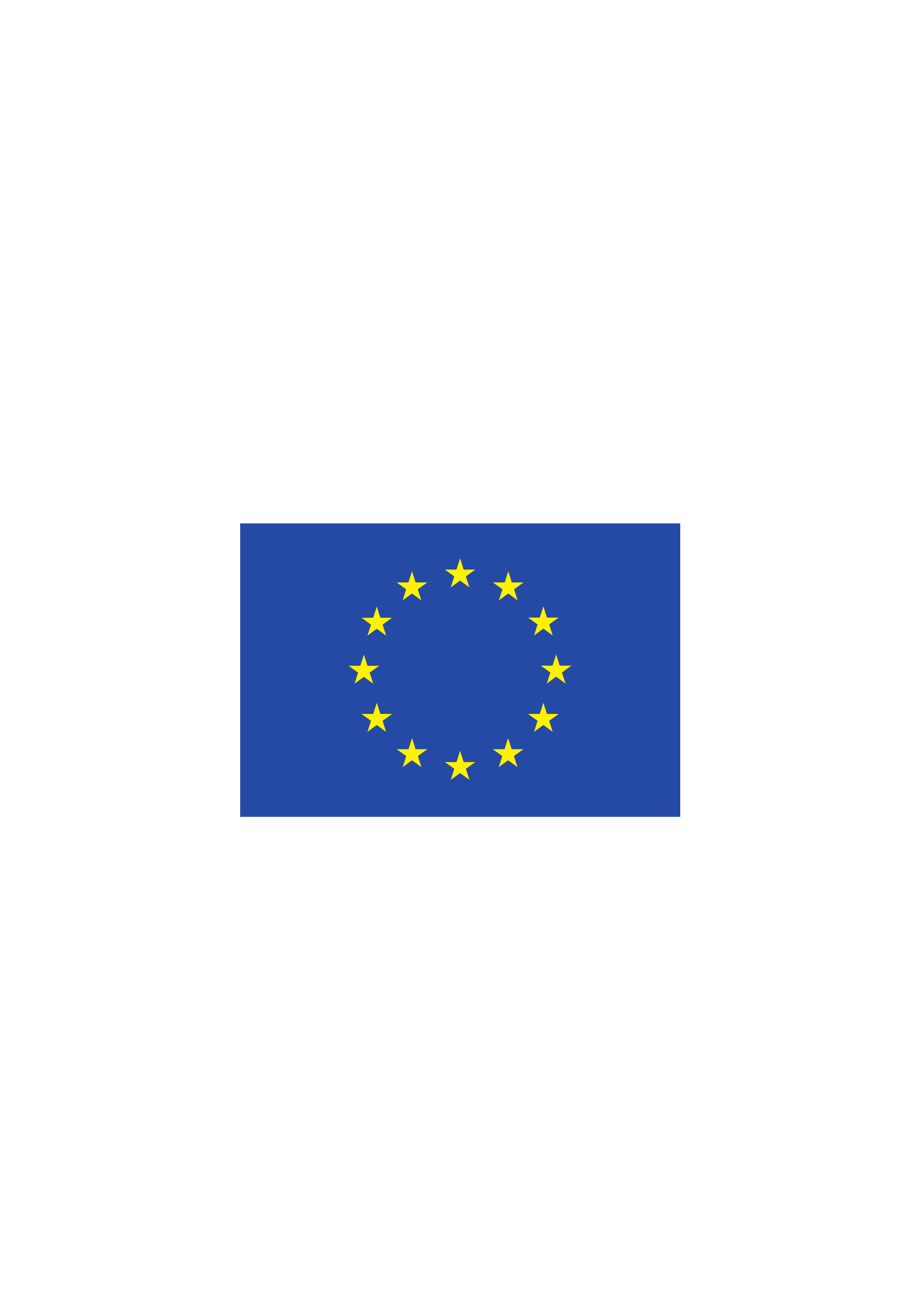}%
\end{textblock}

\begin{abstract}
We prove that for every $t\in \N$ there is a constant $\gamma_t$ such that every graph with twin-width at most $t$ and clique number $\omega$ has chromatic number bounded by $2^{\gamma_t \log^{4t+3} \omega}$. In other words, we prove that graph classes of bounded twin-width are quasi-polynomially \mbox{$\chi$-bounded}. This provides a significant step towards resolving the question of Bonnet et~al.~[ICALP~2021] about whether they are polynomially $\chi$-bounded.

\end{abstract}

\newpage
\pagenumbering{arabic}

\section{Introduction}\label{sec:introduction}

Twin-width is a graph parameter introduced recently by Bonnet et al.~\cite{tww1}. Intuitively, a graph $G$ has twin-width at most $t$ if one can gradually merge the vertices of $G$ into larger and larger subsets, called {\em{parts}}, until all of them are contained in a single part, so that at every point of time, every part has a non-trivial interaction only with at most $t$ other parts. This idea is formalized through the notion of a {\em{contraction sequence}}, see Section~\ref{sec:preliminaries} for details.

Following the work of Bonnet et al.~\cite{tww1}, it very quickly became clear that twin-width is a notion of immense importance that has a prominent place in structural graph theory. There are multiple reasons for this, but perhaps the most convincing is that, as proved in~\cite{tww1}, both graphs of bounded cliquewidth and graphs excluding a fixed minor have bounded twin-width. Thus, twin-width is a robust complexity measure for graphs that is well-suited for the treatment of dense, but well-structured graphs, and at the same time captures graphs that are not necessarily tree-like, for instance grids. Notably, Bonnet et al.~\cite{tww1} proved a suitable grid theorem for twin-width relating the boundedness of twin-width with the non-existence of certain structures in the adjacency matrix. This connection fundamentally relies on the proof of the Stanley-Wilf Conjecture due to Marcus and Tardos~\cite{MarcusTardos}, and is the base for multiple advances in the theory of twin-width, including this work.

Twin-width also appears to be an important concept in finite model theory and its algorithmic aspects, see~\cite{tww4,tww1,BonnetNOdMST21,GajarskyPT21} for a broader discussion. The combinatorics of twin-width were further explored in~\cite{AhnHKO21,BalabanH21,tww2,tww3,tww6,DreierGJOdMR21,JacobP22}, while algorithmic questions around twin-width are considered also in~\cite{BergeBD21,tww3,tww4,BonnetKRTW21,PilipczukSZ21}.

In this work we study a concrete question concerning the combinatorics of twin-width. Recall that a hereditary class of graphs $\CC$ is {\em{$\chi$-bounded}} if there is a function $f\colon \N\to \N$ such that $\chi(G)\leq f(\omega(G))$ for every $G\in \CC$, where $\chi(G)$ and $\omega(G)$ respectively stand for the chromatic number and the clique number of $G$. Bonnet et al.\ proved that every class of graphs of bounded twin-width is $\chi$-bounded~\cite[Theorem~4]{tww3}, or more precisely, if a graph $G$ has twin-width at most $t\geq 3$, then $\chi(G)\leq (t+2)^{\omega(G)-1}$~\cite[Theorem~21]{tww3}. They asked whether the $\chi$-bounding function could be polynomial in the clique number; equivalently, whether classes of bounded twin-width are {\em{polynomially $\chi$-bounded}}. 

Note that Bonamy and Pilipczuk proved that classes of graphs of bounded cliquewidth are indeed polynomially $\chi$-bounded~\cite{BonamyP20}. The proof, however, heavily relies on the tree-like structure of graphs of bounded cliquewidth and it seems difficult to lift it to the setting of twin-width. On the other hand, Gajarsk\'y et al.\ proved that classes of graphs of bounded twin-width that additionally exclude a half-graph as a semi-induced subgraph are even {\em{linearly $\chi$-bounded}}, that is, the $\chi$-bounding function is linear in the clique number~\cite{GajarskyPT21}. However, as pointed out in~\cite{BonamyP20}, a construction of Chudnovsky et al.~\cite{ChudnovskyPST13} shows that already in the case of graphs of bounded cliquewidth, the degree of the polynomial bound on the $\chi$-bounding function has to grow with the cliquewidth. So this has to be the case for classes of bounded twin-width as well, provided they are indeed polynomially $\chi$-bounded in the first place.

In this work we provide a significant step towards resolving the question of Bonnet et al.~\cite{tww3} by showing that classes of graphs of bounded twin-width are {\em{quasi-polynomially $\chi$-bounded}}. More precisely, we prove the following statement.

\begin{theorem}\label{thm:main}
 For every $t\in \N$ there exists a constant $\gamma_t\in \N$ such that for every graph $G$ of twin-width at most $t$ and clique number $\omega$, we have
 $$\chi(G)\leq 2^{\gamma_t\cdot \log^{4t+3} \omega}.$$
\end{theorem}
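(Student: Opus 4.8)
The plan is to leverage the contraction sequence for $G$ witnessing twin-width at most $t$, and to build the coloring by a recursive divide-and-conquer on this sequence. The key object is the "red graph" at each point of the sequence: when the partition has parts interacting non-trivially with at most $t$ others, the red graph (parts as vertices, edges between parts that are neither fully adjacent nor fully non-adjacent) has maximum degree at most $t$, hence is properly colorable with $t+1$ colors. I would use the Marcus--Tardos-based grid theorem of Bonnet et al.\ to find, at a well-chosen moment in the sequence, a partition into a bounded number of parts that is "versatile": each part either induces a graph of smaller clique number, or interacts with the rest of the graph through a controlled (e.g.\ ordered, half-graph-free, or otherwise low-complexity) pattern. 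The recursion then colors each part with the inductive bound for smaller $\omega$, and combines these colorings across parts using the bounded-complexity interaction pattern to pay only a polylogarithmic-in-$\omega$ overhead per level of recursion.

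Concretely, the steps I would carry out are: (1) Fix a contraction sequence of width $t$ and, for a parameter $k$ to be optimized, locate the last moment $\tau_k$ at which every part has size less than some threshold; analyze the trigraph at time $\tau_k$. (2) Apply the grid-type / Marcus--Tardos machinery to argue that either there is a large "mixed" structure forcing a large clique or bi-clique (contradicting the bound on $\omega$), or the trigraph at $\tau_k$ decomposes into few "homogeneous blocks" whose mutual interactions form a sparse (red-degree $\le t$) pattern. (3) For the homogeneous blocks, recurse: each block, viewed as an induced subgraph of $G$, still has twin-width at most $t$; crucially show its clique number drops by a multiplicative factor like $1 - \Omega(1/\log\omega)$, or that after $O(\log \omega)$ such refinements the clique number halves. (4) Combine: color each block with its recursively obtained palette, then take a product with a proper $(t+1)$-coloring of the red pattern among blocks plus a bounded correction for the homogeneous (fully adjacent / fully non-adjacent) interactions; this multiplies the number of colors by a $\log^{O(t)}\omega$ factor per level. (5) Unroll the recursion: with $O(\log \omega)$ levels until $\omega$ becomes constant, and a $\log^{O(t)}\omega$ factor per level, the total number of colors is $\left(\log^{O(t)}\omega\right)^{O(\log\omega)} = 2^{O(\log^{O(t)+1}\omega)}$, which after bookkeeping the exponents yields the claimed $2^{\gamma_t \log^{4t+3}\omega}$.

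The main obstacle, I expect, is step (2)--(3): extracting from the contraction sequence a single moment whose trigraph simultaneously (a) decomposes into a number of parts that is only polylogarithmic in $\omega$ (not polynomial), and (b) guarantees a genuine multiplicative drop in the clique number inside each part. Getting merely "bounded in $t$" many parts is too weak because the clique number would not decrease; getting a drop in $\omega$ typically costs many parts. Balancing these — presumably by iterating the Marcus--Tardos argument $\Theta(\log\omega)$ times and carefully tracking how the red structure and the homogeneous structure compose across iterations — is where the bulk of the technical work, and the source of the exact exponent $4t+3$, will lie. A secondary difficulty is ensuring the recursion is well-founded and that the interaction patterns between blocks are simple enough (e.g.\ that one can reduce to the half-graph-free / linearly $\chi$-bounded regime of Gajarsk\'y et al.\ on the "mixed" part, or handle it directly via the bounded red degree) so that the per-level overhead is indeed only $\log^{O(t)}\omega$ and not worse.
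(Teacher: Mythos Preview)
Your plan has a genuine gap, and you have actually put your finger on it yourself in the last two paragraphs. Steps (2)--(3) ask for a moment in the contraction sequence at which the vertex set splits into only polylogarithmically many parts, each with clique number multiplicatively smaller. You do not propose any mechanism to obtain this; you only say it is ``where the bulk of the technical work'' lies. Without such a mechanism, the scheme degenerates to the standard argument of Bonnet et~al.\ (partition into $t+2$ parts, each with clique number dropping by~$1$), which yields only an exponential bound. Iterating Marcus--Tardos on a single trigraph does not by itself produce a multiplicative drop in $\omega$ inside blocks; the red graph tells you about impurity, not about where cliques live.

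The paper's proof does \emph{not} work with the contraction sequence at all. It works entirely in the adjacency-matrix picture and introduces a second, auxiliary progress measure: the largest $d$ for which the matrix contains a $d$-almost mixed minor. Twin-width at most $t$ implies $d\leq 4t+4$. The recurrence (Lemma~\ref{lem:main}) has two kinds of recursive calls: calls where $\omega$ drops by a constant fraction (with $d$ unchanged), and calls where $d$ drops by~$1$ but $\omega$ may \emph{increase} polynomially (to $2\omega^{d-1}$). The latter calls arise from ``compressions'' of the matrix: one shows that the horizontal/vertical compression along a suitable division is $(d-1)$-almost mixed-free (Lemma~\ref{lem:almost-minor-lifting}), and that its clique number is still polynomial in $\omega$ (Lemma~\ref{lem:clique_lemma}). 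Solving the two-parameter recurrence gives $f_d(\omega)\leq 2^{\beta_d\log^{d-1}\omega}$, whence the exponent $4t+3$. The key idea you are missing is precisely this second axis of induction on the mixed-minor parameter; a single recursion on $\omega$ along the contraction sequence, as you propose, is not known to suffice.
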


Note that for every fixed $t$, the bound provided by Theorem~\ref{thm:main} is quasi-polynomial in $\omega$, but the degree of the polylogarithmic factor in the exponent depends on $t$. We do not know how to reduce the bound to a polynomial, or even to a quasi-polynomial of the form $2^{\gamma_t\cdot \log^c \omega}$ for a constant $c$ independent of $t$.

Let us briefly comment on the key conceptual differences between the proof of (exponential) $\chi$-boundedness of Bonnet et al.~\cite{tww3} and our proof of Theorem~\ref{thm:main}. The proof of Bonnet et al.\ applies a standard strategy in the area of $\chi$-boundedness. Namely, they show that one can partition the vertex set of a graph of twin-width $t$ into $t+2$ subsets so that each subset induces a subgraph with the clique number smaller by at least $1$. Then induction is applied to each of these subgraphs, with the clique number being the progress measure in the induction. Without modifications, this strategy inherently leads to an exponential bound on the $\chi$-bounding function. In our proof, we use two different induction steps:
\begin{itemize}
 \item In one step, we induct on induced subgraphs in which the clique number drops significantly: by a constant fraction.
 \item In the second step, we induct on graphs that may possibly have even larger clique number (but bounded polynomially in the original one), but in which an auxiliary progress measure --- the largest size of an almost mixed minor --- drops by at least one. This auxiliary measure is bounded in terms of the twin-width, so this step can be applied only a constant number of times, provided the twin-width is originally bounded by a constant.
\end{itemize}
The auxiliary progress measure used in the second step is expressed through the non-existence of certain structures in the adjacency matrix of the graph. For this reason, the entire reasoning needs to be conducted in the matrix setting. We believe that the idea behind the second induction step is somewhat novel in the context of twin-width and we hope that it may find applications beyond the scope of this work.

\section{Preliminaries}
\label{sec:preliminaries}

All logarithms throughout this paper are base $2$.
For a positive integer $s$, we write $[s]=\{1,\ldots,s\}$. We use standard graph notation. In particular, $\chi(G)$ and $\omega(G)$ respectively denote the chromatic number and the clique number of a graph $G$.

\paragraph*{Pure pairs.}
Let $G$ be a graph. A pair $A,B$ of disjoint subsets of vertices of $G$ is {\em{complete}} if every vertex of $A$ is adjacent to every vertex of $B$, and {\em{anti-complete}} if there is no edge with one endpoint in $A$ and the other in $B$. The pair $A,B$ is {\em{pure}} if it is complete or anti-complete, and {\em{impure}} otherwise. A vertex $v$ is complete or anti-complete {\em{towards}} a set $B$ with $v\notin B$ if the pair $\{v\},B$ is complete or anti-complete, respectively. Finally, we say that $A$ is {\em{semi-pure towards}} $B$ if every vertex of $A$ is either complete or anti-complete towards $B$.

\paragraph*{Twin-width.} We now recall the definition of the twin-width of a graph. We do it only for the sake of completeness, as later we will rely only on the known connections between the boundedness of twin-width and the non-existence of large mixed minors in adjacency matrices. In particular, essentially the whole reasoning behind the proof of Theorem~\ref{thm:main} is conducted in the setting of matrices.

A {\em{vertex partition}} of $G$ is a partition $\Pc$ of the vertex set of $G$. The operation of {\em{contracting}} two distinct parts $A,B$ of $\Pc$ produces a new partition $\Pc'$ obtained from $\Pc$ by replacing parts $A,B$ with a new part $A\cup B$. A {\em{contraction sequence}} for $G$ is a sequence $\Pc_1,\Pc_2,\ldots,\Pc_n$
of vertex partitions of $G$, where $n$ is the vertex count of $G$, such that
\begin{itemize}[nosep]
 \item $\Pc_1$ is the finest partition where every vertex is in its own part;
 \item $\Pc_n$ is the coarsest partition where all vertices are in the same part; and
 \item $\Pc_{i+1}$ is obtained from $\Pc_i$ by contracting two distinct parts of $\Pc_i$, for all $i\in [n-1]$.
\end{itemize}
The {\em{width}} of such contraction sequence is defined as the smallest integer $t$ such that for every $i\in [n]$ and every part $A$ of $\Pc_i$, there are at most $t$ other parts $B$ in $\Pc_i$ such that $A,B$ is an impure pair. The {\em{twin-width}} of $G$ is the least possible width of a contraction sequence of $G$.

\paragraph*{Matrices and mixed minors.}
As mentioned, in this work we mostly rely on an understanding of twin-width of graphs through mixed minors in their adjacency matrices. This understanding is the cornerstone of the theory of twin-width and was developed in~\cite{tww1}. We need several definitions concerning divisions of matrices.

\begin{definition}[\cite{tww1}]
A~matrix is {\em{horizontal}} if all its rows are constant, and {\em{vertical}} if all its columns is constant.
If a matrix is both horizontal and vertical, then we call it {\em{constant}}.
If a matrix is neither horizontal nor vertical, then we call it {\em{mixed}}.
Note that a~matrix $M$ is mixed if and only if it contains a~{\em{corner}}: a~$2 \times 2$ mixed contiguous submatrix of $M$.
\end{definition}

\begin{definition}[\cite{tww1}]
Let $M$ be a matrix. A subset of rows of $M$ is {\em{convex}} if it is contiguous in the order of rows in $M$; same for subsets of columns. 
A~{\em{division}} of $M$ is a pair $\Dc=(\Rc,\Cc)$, where $\Rc$ is a partition of rows into convex subsets, called {\em{row blocks}}, and $\Cc$ is a partition of columns into convex subsets, called {\em{column blocks}}. We call $\Dc$ an {\em{$s$-division}} if $|\Rc|=|\Cc|=s$, and in this case we index the row and column blocks of $\Dc$ with integers from $[s]$ according to the natural order of blocks.
The intersection of the $i$th row block and the $j$th column block ($i, j \in [s]$) is a~contiguous submatrix of $M$, which we call a {\em{zone}} of $\Dc$ and denote by $\Dc[i, j]$.
Similarly, by $\Dc[i_1 \dots i_2,\ j_1 \dots j_2]$ we mean the contiguous submatrix of $M$ consisting of the union of zones $\Dc[i,j]$ for $i_1\leq i\leq i_2$ and $j_1\leq j\leq j_2$. 
Supposing $M$ is symmetric, we call $\Dc$ {\em{symmetric}} if $\Rc$ and $\Cc$ partition rows and columns in exactly the same way; that is, projecting $\Rc$ and $\Cc$ naturally onto indices of rows and columns produces the same partition. 
\end{definition}

\begin{definition}
Let $M$ be a matrix.
A~$d$-division $\Dc$ of $M$ is a~\emph{$d$-mixed minor} if each zone of $\Dc$ is mixed.
We say that $M$ is \emph{$d$-mixed-free} if $M$ has no  $d$-mixed minor.
\end{definition}

The following result of Bonnet et al.~\cite{tww1} provides the connection between the twin-width of a graph and mixed minors in its adjacency matrix. Note here that to construct an adjacency matrix of a graph one has to specify a {\em{vertex ordering}}: a total order on the vertex set signifying the organization of rows and columns.  

\begin{theorem}[\cite{tww1}]\label{thm:tww-mm}
 Let $G$ be a graph and $t,d$ be positive integers. The following holds:
 \begin{itemize}
  \item If the twin-width of $G$ is at most $t$, then there is a vertex ordering of $G$ for which the adjacency matrix of $G$ is $(2t+2)$-mixed-free.
  \item If there is a vertex ordering of $G$ for which the adjacency matrix of $G$ is $d$-mixed-free, then the twin-width of $G$ is at most $2^{2^{\Oh{d}}}$.
 \end{itemize}
\end{theorem}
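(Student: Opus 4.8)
The statement is due to Bonnet et al.~\cite{tww1}; I sketch the plan for both implications. \emph{First implication.} Given a contraction sequence $\Pc_1,\dots,\Pc_n$ of width at most $t$, I would read it in reverse as a sequence of binary splits --- each step replacing one part by two --- which organizes the laminar family of all parts that ever appear into a binary tree $T$ whose leaves are the vertices of $G$. Fix the vertex ordering $\sigma$ given by a left-to-right traversal of the leaves of $T$ under a plane embedding, and let $A$ be the adjacency matrix of $G$ with respect to $\sigma$; then every interval of $\sigma$ is a disjoint union of subtrees of $T$. Now suppose $A$ had a $(2t+2)$-mixed minor with row blocks $R_1,\dots,R_{2t+2}$ and column blocks $C_1,\dots,C_{2t+2}$. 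Each zone, being mixed, contains a corner: two rows and two columns whose $2\times 2$ submatrix is neither horizontal nor vertical. Reading the ``difference'' in such a corner and tracing it to the step of the sequence at which $T$ first separates the relevant pair of leaves exhibits an impure pair of parts alive at that step. A pigeonhole argument over the $2(t+1)$ row blocks and $2(t+1)$ column blocks, using the symmetry of $A$ to absorb the factor of $2$, then localizes a single partition $\Pc_i$ in which one part has an impure interaction with at least $t+1$ other parts, contradicting that the width is at most $t$.

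\emph{Second implication.} Suppose the adjacency matrix $M$ (with respect to some fixed ordering) is $d$-mixed-free; I want to build a contraction sequence of width $2^{2^{\Oh{d}}}$. The engine is the Marcus--Tardos theorem~\cite{MarcusTardos}: for every $k$ there is a constant $c_k$ such that any $0/1$ matrix with $N$ rows and $N$ columns and more than $c_k N$ ones contains a $k$-grid minor, i.e.\ a $k$-division with a $1$ in every zone. I would apply this to the $0/1$ pattern recording which zones of a division of $M$ into many equal blocks are mixed: since a zone that contains a mixed sub-zone is itself mixed, a $d$-grid minor of that pattern lifts to a $d$-mixed minor of $M$, so the pattern is $d$-grid-free, and hence has only $\Oh[d]{1}$ mixed blocks per row and per column on average. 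Coarsening $M$ along such a division, one can start contracting: within a row block whose interaction with a given column block is horizontal, merging rows introduces no error against that column block; symmetrically for columns; and constant zones are free; so only the few mixed zones per block survive as impure interactions. Recursing into the blocks and iterating this hierarchically, with careful accounting of the impure interactions created so that the impure degree of a part does not accumulate with the number of levels, yields a contraction sequence whose width depends only on $d$; tracking the compounded losses gives the claimed $2^{2^{\Oh{d}}}$ bound.

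\emph{Main obstacle.} The hard part is the second implication, and within it the quantitative bookkeeping: a naive level-by-level recursion lets the number of impure neighbors of a part grow with the number of levels, so the construction has to be organized (bottom-up, with re-coarsening) so that the width stays bounded purely in terms of $d$. This amortization, compounded with the tower-type loss from iterating Marcus--Tardos, is exactly where the double-exponential dependence on $d$ arises and where the argument is genuinely delicate.
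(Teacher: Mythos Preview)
The paper does not prove Theorem~\ref{thm:tww-mm}: it is quoted verbatim from Bonnet et al.~\cite{tww1}, and the only comment the paper makes is that ``the back-bone of the proof of Theorem~\ref{thm:tww-mm} is the following lemma'' (namely Lemma~\ref{lem:marcus-tardos}, the Marcus--Tardos consequence). So there is no in-paper proof to compare against. Your sketch of the second implication is consistent with that one hint: you correctly identify Marcus--Tardos as the engine and correctly locate the delicate part in the amortization of impure degrees across levels of the recursive coarsening.

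On the first implication, your outline has the right architecture (contraction sequence $\to$ binary tree $\to$ leaf ordering, then argue by contradiction from a hypothetical $(2t+2)$-mixed minor), but the pigeonhole step as you state it does not quite work. The factor $2$ in $2t+2$ is \emph{not} absorbed by the symmetry of the adjacency matrix; the argument in~\cite{tww1} applies to non-symmetric matrices as well. What actually happens is that one looks at the $2t+1$ borders between consecutive row blocks and, for each, the moment in the contraction sequence when the two leaves straddling that border first land in the same part; a median-type argument on these $2t+1$ time stamps singles out one partition $\Pc_i$ in which at least $t+1$ of the row blocks are still separated into distinct parts, and the mixed zones in the corresponding rows of the minor then witness $t+1$ impure neighbors of a single part. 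Your phrase ``tracing it to the step of the sequence at which $T$ first separates the relevant pair of leaves exhibits an impure pair of parts alive at that step'' gestures at this, but as written it produces $2t+2$ impure pairs scattered across possibly $2t+2$ different time steps, which is not yet a contradiction; the missing ingredient is the synchronization that forces many of them to coexist at one step.
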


The back-bone of the proof of Theorem~\ref{thm:tww-mm} is the following lemma, whose proof is an easy application of the classic result of Marcus and Tardos~\cite{MarcusTardos}.

\begin{lemma}[\cite{tww1}]
\label{lem:marcus-tardos}
For every integer $d\geq 1$ there is a constant $c_d$ such that the following holds: if a matrix $M$ is $d$-mixed-free, then every $s$-division of $M$ contains at most $c_d \cdot s$ mixed zones.
\end{lemma}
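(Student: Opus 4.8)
The plan is to establish the contrapositive: if $\Dc$ is an $s$-division of a matrix $M$ in which more than $c_d\cdot s$ zones are mixed, for a constant $c_d$ fixed below, then $M$ admits a $d$-mixed minor, contradicting $d$-mixed-freeness. I begin by compressing the mixedness information of $\Dc$ into an auxiliary $0/1$ matrix $A$ of size $s\times s$, with $A[i,j]=1$ precisely when the zone $\Dc[i,j]$ is mixed; thus $A$ has more than $c_d\cdot s$ one-entries. Since $\Dc$ has only $s^2$ zones in total, the asserted bound is vacuous when $s\le c_d$, so we may assume $s>c_d\ge d^2$ throughout (the constant $c_d$ will be inflated below so that $c_d\ge d^2$).

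The core of the argument is to turn ``many ones in $A$'' into a coarse grid all of whose cells are hit, and this is exactly the step powered by the Marcus--Tardos theorem~\cite{MarcusTardos}. Let $\pi^\ast$ be the \emph{transpose} permutation of $[d]\times[d]$, i.e.\ $\pi^\ast(a,r)=(r,a)$, viewed as a permutation of $[d^2]$ via the identification $m=(a-1)d+r\leftrightarrow (a,r)$. Marcus--Tardos applied to the fixed permutation $\pi^\ast$ yields a constant $c_{\pi^\ast}$ such that any $0/1$ matrix with more than $c_{\pi^\ast}$ times its number of rows many ones contains the permutation matrix of $\pi^\ast$ as a submatrix; I set $c_d:=\max(c_{\pi^\ast},d^2)$. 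Applying this to $A$ gives row indices $i_1<\dots<i_{d^2}$ and column indices $j_1<\dots<j_{d^2}$ of $A$ with $A[i_m,j_{\pi^\ast(m)}]=1$ for every $m\in[d^2]$. Now group the chosen rows into $d$ consecutive interval blocks $R_1,\dots,R_d$, the $a$-th block containing $i_{(a-1)d+1},\dots,i_{ad}$, and likewise the chosen columns into $C_1,\dots,C_d$. By the choice of $\pi^\ast$, for every $a,b\in[d]$ the one-entry indexed by $m=(a-1)d+b$ has its row in $R_a$ and, since $\pi^\ast(m)=(b-1)d+a$, its column in $C_b$; hence each of the $d^2$ cells $R_a\times C_b$ contains a one of $A$. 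Finally, enlarging the intervals $R_a$ (and $C_b$) so as to absorb the gaps between consecutive ones turns this into an honest $d$-division $\Ec$ of $A$ --- a partition of the rows and of the columns of $A$ into $d$ consecutive blocks --- in which every zone still contains a one-entry.

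It remains to transfer $\Ec$ back to $M$. The $p$-th row block of $\Ec$ is a set of consecutive indices in $[s]$, hence a set of consecutive row blocks of $\Dc$; taking the union of the corresponding rows of $M$, and doing the same for columns, yields a $d$-division $\Dc'$ of $M$. Fix a zone $\Dc'[p,q]$. Since the $(p,q)$-cell of $\Ec$ contains a one-entry of $A$, say at position $(i^\ast,j^\ast)$, the zone $\Dc[i^\ast,j^\ast]$ of $\Dc$ is mixed and sits as a contiguous submatrix inside $\Dc'[p,q]$. A mixed matrix contains a corner, and a corner of $\Dc[i^\ast,j^\ast]$ is also a corner of $\Dc'[p,q]$; hence $\Dc'[p,q]$ is mixed. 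As $p,q$ were arbitrary, $\Dc'$ is a $d$-mixed minor of $M$, contradicting the hypothesis, so every $s$-division of $M$ has at most $c_d\cdot s$ mixed zones.

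The only genuinely non-trivial point is the middle step: extracting an all-nonempty $d$-division of $A$ from the mere fact that $A$ has $\Omega(s)$ ones. One cannot hope to find a $d\times d$ all-ones submatrix of $A$ --- Zarankiewicz-type $0/1$ matrices have a super-linear number of ones yet avoid large all-ones submatrices --- so it is essential that the grid is made of intervals rather than single lines, and that Marcus--Tardos is invoked with a sufficiently ``spread'' permutation such as the transpose $\pi^\ast$ (for the identity permutation the analogous bundling would only make the diagonal cells non-empty). Everything else --- building $A$, bundling the selected rows and columns, absorbing gaps, and lifting a corner from a finer zone to the coarser zone that contains it --- is routine bookkeeping.
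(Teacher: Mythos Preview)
Your proof is correct and follows precisely the route the paper alludes to: the lemma is quoted from~\cite{tww1} without proof, with only the remark that it is ``an easy application of the classic result of Marcus and Tardos,'' and your argument---encode zone-mixedness as an $s\times s$ $0/1$ matrix, invoke Marcus--Tardos to extract a $d$-grid in which every cell is hit, then lift back to a $d$-mixed minor of $M$---is exactly that application. Your explicit use of the transpose permutation $\pi^\ast$ on $[d]^2$ is one clean way to derive the ``every cell of a $d$-division contains a one'' form of Marcus--Tardos from the permutation-avoidance form; some presentations state that grid form directly as a corollary, which would let you skip the $\pi^\ast$ bookkeeping, but either way the content is the same.
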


In this work we will work with a slight relaxation of the notion of a mixed minor, explained next. On the technical level, this detail is somewhat analogous to the choice of working with the notion of {\em{quasi-index}} in~\cite{GajarskyPT21}.

\begin{definition}
A~$d$-division $\Dc$ of a matrix $M$ is a~\emph{$d$-almost mixed minor} if for each pair of indices $i, j \in [d]$ with $i \neq j$, the zone $\Dc[i, j]$ is mixed.
A~matrix is \emph{$d$-almost mixed-free} if it admits no $d$-almost mixed minors.
\end{definition}

Thus, almost mixed minors differ from mixed minors in that it is not required that the ``diagonal'' zones are mixed.
Clearly, every $d$-almost mixed free matrix is also $d$-mixed free. On the other hand, every $d$-mixed-free matrix is $2d$-almost mixed-free. To see that, note that if there was a $2d$-almost mixed minor, then its first $d$ row blocks and last $d$ column blocks would induce a submatrix with a $d$-mixed minor. Also, note that any submatrix of a $d$-(almost) mixed-free matrix is also $d$-(almost) mixed-free; we will often use this fact~implicitly.

\section{Obtaining the recurrence}\label{sec:recurrence-proof}

In this section we provide the main step towards the proof of Theorem~\ref{thm:main}, which is a recursive upper bound on the $\chi$-bounding function of graphs admitting a $d$-almost mixed-free adjacency matrix. After giving some preliminary tools in Section~\ref{sec:compressions}, we formulate this main step in Lemma~\ref{lem:main} in Section~\ref{sec:main-lemma-statement}, and devote the remainder of Section~\ref{sec:recurrence-proof} to the proof of this lemma.

\subsection{Compressions}\label{sec:compressions}

We start with some auxiliary results on {\em{horizontal}} and {\em{vertical compressions}} of matrices. These will be later used for handling recursive steps that apply to submatrices with smaller excluded almost mixed minors.

Call a matrix $M$ {\em{graphic}} if it is symmetric, has all entries in $\{0,1\}$, and all entries on the diagonal of $M$ are $0$. In other words, $M$ is graphic if it is the adjacency matrix of some graph (with respect to some vertex ordering).

\begin{definition}
  Let $\Dc$ be a symmetric $s$-division of a graphic matrix $M$. We define the \emph{horizontal compression} $G_\Dc^\Hs$ as the graph over vertex set $[s]$ where for any $1 \leq i < j \leq s$, we have $ij \in E(G_\Dc^\Hs)$ if and only if the zone $\Dc[i, j]$ is non-zero horizontal (equivalently, $\Dc[j, i]$ is non-zero vertical).
  We define the \emph{vertical compression} $G_\Dc^\Vs$ symmetrically. The \emph{mixed compression} $G_\Dc^\Ms$ is defined analogously, but we put an edge $ij$ whenever $i\neq j$ and the zone $\Dc[i,j]$ is mixed. 
\end{definition}

Let $M_\Dc^\Hs$, $M_\Dc^\Vs$, and $M_\Dc^\Ms$ be the adjacency matrices of $G_\Dc^\Hs$, $G_\Dc^\Vs$, and $G_\Dc^\Ms$ respectively, with respect to the natural vertex orderings inherited from $M$. 

We first note that thanks to the Marcus-Tardos Theorem, the mixed compression of a $d$-mixed-free matrix is always sparse, and hence colorable with few colors.

\begin{lemma}\label{lem:mixed-degenerate}
 For every $d\in \N$ there exists a constant $C_d$ such that the following holds.
 Let $M$ be a $d$-mixed-free graphic matrix and $\Dc$ be a symmetric division of $M$. Then $\chi(G^\Ms_\Dc)\leq C_d$.
\end{lemma}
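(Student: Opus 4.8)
The plan is to use the Marcus--Tardos-based counting lemma, Lemma~\ref{lem:marcus-tardos}, to show that $G^\Ms_\Dc$ is sparse in a hereditary sense, namely that every induced subgraph has bounded average degree, and then invoke the standard fact that a graph in which every subgraph has average degree at most $2k$ is $(2k+1)$-degenerate, hence $(2k+1)$-colorable. So the constant will be $C_d = 2c_d+1$ or thereabouts, where $c_d$ is the constant from Lemma~\ref{lem:marcus-tardos}.

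Concretely, let $s$ be the number of parts of $\Dc$, so the vertex set of $G^\Ms_\Dc$ is $[s]$, identified with the row/column blocks of $\Dc$. Consider any nonempty $W \subseteq [s]$; I want to bound $|E(G^\Ms_\Dc[W])|$. The key observation is that $W$ naturally induces a symmetric $|W|$-division $\Dc_W$ of the submatrix $M_W$ of $M$ obtained by keeping only the rows and columns lying in blocks indexed by $W$: the row blocks of $\Dc_W$ are exactly the row blocks of $\Dc$ indexed by elements of $W$, ordered as in $M$, and similarly for columns. Each zone $\Dc_W[i,j]$ with $i,j \in W$ is literally equal to the zone $\Dc[i,j]$ as a contiguous submatrix, so it is mixed in $\Dc_W$ if and only if it is mixed in $\Dc$. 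Since $M_W$ is a submatrix of the $d$-mixed-free matrix $M$, it is itself $d$-mixed-free, so Lemma~\ref{lem:marcus-tardos} applied to the $|W|$-division $\Dc_W$ gives that at most $c_d \cdot |W|$ zones of $\Dc_W$ are mixed. Each edge $ij$ of $G^\Ms_\Dc[W]$ corresponds to two mixed zones $\Dc[i,j]$ and $\Dc[j,i]$ (with $i \ne j$), so $2|E(G^\Ms_\Dc[W])| \le c_d \cdot |W|$, i.e.\ the average degree of $G^\Ms_\Dc[W]$ is at most $c_d$.

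Since this holds for every induced subgraph, $G^\Ms_\Dc$ is $\lfloor c_d \rfloor$-degenerate: repeatedly, the minimum-degree vertex of any induced subgraph has degree at most its average degree, which is at most $c_d$. A $\lfloor c_d\rfloor$-degenerate graph is $(\lfloor c_d\rfloor+1)$-colorable by the usual greedy argument. So we may take $C_d := \lfloor c_d\rfloor + 1$, which depends only on $d$, as required.

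I do not anticipate a real obstacle here; the only point that needs a moment of care is the claim that restricting $\Dc$ to the blocks in $W$ yields a genuine division of a submatrix of $M$ whose mixed zones are precisely the mixed zones of $\Dc$ among those blocks --- this is immediate because convexity of a subset of blocks in $W$ within $M_W$ is automatic (each block is itself convex, and we just concatenate them), and ``mixed'' is a property of a zone viewed as a standalone matrix, unaffected by deleting other rows and columns. One should also note that Lemma~\ref{lem:marcus-tardos} is stated for $d$-mixed-free matrices, and here $M$ is assumed $d$-mixed-free, so it applies directly; no passage through almost-mixed-freeness is needed for this particular lemma.
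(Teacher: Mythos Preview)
Your proof is correct and follows essentially the same approach as the paper: restrict to an arbitrary subset of blocks, apply Lemma~\ref{lem:marcus-tardos} to the induced submatrix (using closure of $d$-mixed-freeness under submatrices) to bound the number of mixed zones, deduce that every induced subgraph of $G^\Ms_\Dc$ has at most $\tfrac12 c_d|W|$ edges, and conclude $c_d$-degeneracy and hence $(c_d+1)$-colorability. The paper's write-up is slightly terser but the argument is the same; your extra care in justifying that the restricted division is a genuine division of a $d$-mixed-free submatrix is entirely appropriate.
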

\begin{proof}
  Let $k$ be the number of row blocks (equivalently, of column blocks) of $\Dc$.
  By Lemma~\ref{lem:marcus-tardos} and the fact that $d$-mixed-freeness is closed under taking submatrices, for every subset $S \subseteq [k]$, there are at most $\frac12 c_d |S|$ pairs $i,j\in S$, $i<j$, such that the zone $\Dc[i,j]$ is mixed. (The factor $\frac12$ comes from the fact that $M$ is symmetric, so each such pair  contributes with two mixed zones.)
  Thus, $|E(G^\Ms_\Dc[S])| \leq \frac12 c_d |S|$.
  Since $S$ was chosen arbitrarily, it follows that $G^\Ms_\Dc$ is $c_d$-degenerate, and hence $\chi(G^\Ms_\Dc) \leq c_d + 1$.
  So we may set $C_d \coloneqq  c_d + 1$.
\end{proof}

We next observe that almost mixed minors in $M_\Dc^\Hs$ lift to almost mixed minors in~$M$.

\begin{lemma}
  \label{lem:almost-minor-lifting}
  Let $M$ be a graphic matrix and $\Dc$ be a symmetric division of $M$. Suppose $M_\Dc^\Hs$ contains a~$d$-almost mixed minor $\Ec$ for some  $d \geq 2$. (Note that $\Ec$ is not necessarily symmetric.) Then $M$ contains a~$d$-almost mixed minor $\Lc$ such that:
  \begin{itemize}[nosep]
    \item $\Lc$ is a coarsening of $\Dc$ (that is, each row block of $\Lc$ is the~union of some convex subset of row blocks of $\Dc$, and the same applies for column blocks); and
    \item each zone of $\Lc$ is formed by an~intersection of at least two row blocks of $\Dc$ and at least two column blocks of $\Dc$.
  \end{itemize}
\end{lemma}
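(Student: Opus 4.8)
\textbf{Proof plan for Lemma~\ref{lem:almost-minor-lifting}.}
The plan is to lift each row block and column block of the $d$-almost mixed minor $\Ec$ from the compressed matrix $M_\Dc^\Hs$ back to $M$ by taking unions of the corresponding blocks of $\Dc$, and then to fix up this naive coarsening so that the second bullet point (each zone spans at least two row blocks and at least two column blocks of $\Dc$) is satisfied. Write the vertex set of $G_\Dc^\Hs$ as $[s]$, where $s$ is the number of blocks of $\Dc$; the division $\Ec$ partitions $[s]$ into $d$ convex row blocks $R_1,\dots,R_d$ and $d$ convex column blocks $C_1,\dots,C_d$. Since $[s]$ is simultaneously the index set of row blocks and of column blocks of $\Dc$, the sets $R_1,\dots,R_d$ (resp. $C_1,\dots,C_d$) naturally select convex unions of row blocks (resp. column blocks) of $\Dc$; call the resulting coarsening $\Lc_0$. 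The first and main step is to show that every off-diagonal zone of $\Lc_0$ is mixed in $M$.

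For this step, fix $i\neq j$ and consider the zone $\Lc_0[i,j]$, which is the contiguous submatrix of $M$ cut out by the row blocks of $\Dc$ indexed by $R_i$ and the column blocks indexed by $C_j$. Because $\Ec[i,j]$ is mixed in $M_\Dc^\Hs$, it contains a corner: there are indices $a,a'\in R_i$, $b,b'\in C_j$ such that the $2\times 2$ submatrix of $M_\Dc^\Hs$ on rows $a,a'$ and columns $b,b'$ is mixed. I would now translate each such entry of $M_\Dc^\Hs$ into a statement about the corresponding zone $\Dc[a,b]$ of $M$: by definition of horizontal compression, an entry $1$ means $\Dc[a,b]$ is non-zero horizontal, while an entry $0$ means $\Dc[a,b]$ is zero (hence horizontal and constant). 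So within $\Lc_0[i,j]$ we have four zones of $\Dc$ forming a $2\times 2$ pattern, each horizontal, and the pattern of "is this zone zero or non-zero" is itself a $2\times 2$ mixed $0/1$ matrix. The goal is to extract an honest corner of $M$ inside $\Lc_0[i,j]$. Since each of the four zones is horizontal, each is determined by its (constant) rows; a non-zero horizontal zone has a constant non-zero row, a zero zone has the all-zero row. Picking one row from each of the two row blocks $a,a'$ and concatenating the four zones along the columns $b,b'$, one obtains a $2\times (\text{width})$ submatrix of $M$ whose two rows differ (because the zero/non-zero pattern differs in at least one column-block position and disagrees in at least one row-block position — that is exactly what "mixed $2\times2$ pattern" gives), hence this submatrix is not vertical; and one also needs it to be not horizontal, i.e. some row is non-constant. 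If both rows were constant, then in each column block $b$ and $b'$ the two chosen rows would be globally constant, forcing the $2\times2$ zero/non-zero pattern to be horizontal, contradiction. (One has to be slightly careful: "non-constant row" could fail if a non-zero horizontal zone has constant non-zero rows equal to the constant of an adjacent zone; I would argue at the level of the $2\times2$ pattern that mixedness of that pattern precisely rules this out.) Thus $\Lc_0[i,j]$ contains a corner and is mixed in $M$.

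The second step is the fix-up for the size condition. As it stands, a row block $R_i$ of $\Ec$ could be a singleton, so the corresponding row block of $\Lc_0$ is a single row block of $\Dc$; similarly for columns. To remedy this, I would enlarge each block: replace $\Lc_0$ by a coarsening $\Lc$ obtained by merging, into each of its $d$ row blocks, enough adjacent row blocks of $\Dc$ so that every row block of $\Lc$ contains at least two row blocks of $\Dc$, and symmetrically for columns; this is possible as long as $s \geq 2d$ in each direction, which we may assume since otherwise $M_\Dc^\Hs$, having at most $2d-1$ rows, could not contain a $d$-almost mixed minor in the first place (a $d$-division needs at least $d$ rows, and the argument actually needs a bit more — I would state the clean sufficient bound and note it follows from $\Ec$ existing). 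Crucially, enlarging blocks only enlarges the off-diagonal zones, and a contiguous submatrix containing a mixed submatrix is itself mixed; so all off-diagonal zones of $\Lc$ remain mixed, and $\Lc$ is the desired $d$-almost mixed minor of $M$ satisfying both bullet points.

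\textbf{Main obstacle.} The only genuinely delicate point is the first step: carefully verifying that a $2\times2$ mixed pattern of zero/non-zero horizontal zones of $\Dc$ really forces a corner in $M$, as opposed to merely a non-constant-somewhere submatrix. The bookkeeping about which row/column to pick from each block, and the case analysis ensuring the extracted $2\times2$ submatrix of $M$ is neither horizontal nor vertical, is where the proof has to be written out with care; everything else is routine coarsening manipulation.
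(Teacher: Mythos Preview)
Your first step has a genuine gap: you have misread the definition of $G_\Dc^\Hs$. The entry $M_\Dc^\Hs[a,b]=0$ does \emph{not} mean that $\Dc[a,b]$ is the zero matrix; above the diagonal it means only that $\Dc[a,b]$ fails to be non-zero horizontal, so it could be vertical non-constant, mixed, or constant~$1$. Worse, below the diagonal ($a>b$) the entry $M_\Dc^\Hs[a,b]=1$ means $\Dc[a,b]$ is non-zero \emph{vertical}, not horizontal, and on the diagonal $\Dc[a,a]$ is graphic and may itself be mixed. Since $\Ec$ is not assumed symmetric (the statement stresses this), the $2\times 2$ corner you extract from an off-diagonal zone $\Ec[i,j]$ can straddle or cross the diagonal of $M_\Dc^\Hs$, so these cases cannot be avoided. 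Your claim that all four relevant zones of $\Dc$ are horizontal is therefore unfounded, and the row-picking argument collapses. The paper's proof performs the same natural lift you describe, but then carries out a case analysis on the position of the corner relative to the diagonal of $M_\Dc^\Hs$ (strictly above, strictly below, touching in one entry, touching in two), handling each separately.

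Your fix-up step is unnecessary. Since $d\ge 2$, each row block $R_i$ of $\Ec$ participates in some mixed zone $\Ec[i,j]$ with $j\ne i$, which contains a corner and hence at least two rows; thus $|R_i|\ge 2$ automatically, and likewise $|C_j|\ge 2$. The natural lift already satisfies both bullets, as the paper notes in one line. (Your redistribution idea is also ill-posed as stated: the row blocks of $\Lc_0$ already partition all rows of $M$, so there are no spare row blocks of $\Dc$ to merge in; you could only move blocks between adjacent $R_i$'s, and you do not argue this can be done.)
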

  \begin{proof}
    We lift $\Ec$ to a~$d$-almost mixed division $\Lc$ of $M$ naturally as follows.
    For $i\in [d]$, if the $i$th row block of $\Ec$ spans rows $r_1\dots r_2$ of $M_\Dc^\Hs$, then we set the $i$th row block of $\Lc$ to be the union of row blocks of $\Ec$ from the $r_1$th to the $r_2$th. Similarly for column blocks. Thus, if $i,j\in [d]$ and $\Ec[i, j]$ is the intersection of rows $r_1 \dots r_2$ and columns $c_1 \dots c_2$ of $M_\Dc^\Hs$, then    
    $$\Lc[i, j]=\Dc[r_1 \dots r_2,\, c_1 \dots c_2].$$
    Clearly, $\Lc$ is a coarsening of $\Dc$.
    Further, since $d \geq 2$ and $\Ec$ is a $d$-almost mixed minor of $M_\Dc^\Hs$, it is easy to see that each row block and each column block of $\Lc$ has to span at least two blocks of $\Dc$, for otherwise no corner in $M_\Dc^\Hs$ would fit into this block. 
    So it remains to show that each zone $\Lc[i, j]$ with $i,j\in [d]$, $i \neq j$, is mixed.
    
    Fix some $i, j \in [d]$ with $i \neq j$.
    Assume that $\Ec[i, j]$ is formed by the intersection of rows $r_1 \dots r_2$ and columns $c_1 \dots c_2$ of $M_\Dc^\Hs$.
    Since $\Ec[i, j]$ is mixed, there is a~corner $C=M_\Dc^\Hs[r\dots r+1,\, c\dots c+1]$ for some $r_1 \leq r < r_2$ and $c_1 \leq c < c_2$.
    It is then enough to show that the submatrix $A := \Dc[r \dots r+1,\, c \dots c+1]$ of $M$ is mixed.
    Indeed, since $A$ is a~submatrix of $\Lc[i, j] = \Dc[r_1 \dots r_2, c_1 \dots c_2]$, it will follow that $\Lc[i, j]$ is mixed as well.
    
    We perform a case study, depending on the value of $c - r$ (which intuitively signifies how close $C$ is to the diagonal of $M_\Dc^\Hs$):
    
    \smallskip
    \underline{Case 1a:} $c \geq r + 2$. Then $C$ is strictly above the diagonal of $M_\Dc^\Hs$.
    So for each $i \in \{r, r+1\}$ and $j \in \{c, c+1\}$, we have $M_\Dc^\Hs[i, j] = 1$ if and only if $\Dc[i, j]$ is non-zero horizontal.
    If $A$ were horizontal, then for each $i \in \{r, r+1\}$ we would have $M_\Dc^\Hs[i, c] = M_\Dc^\Hs[i, c+1]$, and $C$ would be horizontal; a contradiction with $C$ being a corner.
    Similarly, if $A$ were vertical, then for each $j\in \{c,c+1\}$ we would have $M_\Dc^\Hs[r, j] = M_\Dc^\Hs[r+1, j]$ and $C$ would be vertical; again a contradiction with $C$ being a corner.
    So $A$ must be mixed.
    
    
    \smallskip
    \underline{Case 1b:} $c \leq r - 2$. Then $C$ is strictly below the diagonal of $M_\Dc^\Hs$.
    So for each $i \in \{r, r+1\}$ and $j \in \{c, c+1\}$, we have $M_\Dc^\Hs[i, j] = 1$ if and only if $\Dc[i, j]$ is non-zero vertical.
    Hence, the proof is symmetric to Case 1a.
    
    \smallskip
    \underline{Case 2a:} $c = r + 1$. Then $C$ intersects the diagonal of $M_\Dc^\Hs$ at entry $M_\Dc^\Hs[r + 1, c]$, while the remaining entries are above the diagonal.
    Observe that $\Dc[r + 1, c]$ is graphic, and thus either constant $0$ or mixed.
    If it is mixed, then $A$ is already mixed as well. So from now on assume that $\Dc[r + 1, c]$ is constant $0$.
    Consequently, $M_\Dc^\Hs[r+1, c] = 0$.
    
    First, assume that $A$ is horizontal.
    Then, $\Dc[r + 1, c + 1]$ is constant $0$ as well and thus $M_\Dc^\Hs[r + 1, c + 1] = 0$.
    Moreover, $\Dc[r,\, c\dots c+1]$ is horizontal, implying that $M_\Dc^\Hs[r, c] = M_\Dc^\Hs[r, c+1]$. (Note that both these entries are equal to $1$ if and only if $\Dc[r, c]$ is non-zero, or equivalently if $\Dc[r, c+1]$ is non-zero.)
    So $C$ cannot be a~corner in $M_\Dc^\Hs$, a contradiction.
    
    Next, assume that $A$ is vertical.
    Analogously, $\Dc[r, c]$ is constant $0$, hence $M_\Dc^\Hs[r, c] = 0$, while $\Dc[r\dots r+1,\, c+1]$ is vertical, implying that $M_\Dc^\Hs[r, c+1] = M_\Dc^\Hs[r+1, c+1]$. So again, we conclude that $C$ cannot be a~corner, a contradiction.
    
    \smallskip
    \underline{Case 2b:} $c = r - 1$. This case is symmetric to Case 2a.
    
    \smallskip
    \underline{Case 3:} $c = r$. That is, $C$ intersects the diagonal of $M_\Dc^\Hs$ at $M_\Dc^\Hs[r, c]$ and $M_\Dc^\Hs[r + 1, c + 1]$, while $M_\Dc^\Hs[r+1, c]$ is below the diagonal and $M_\Dc^\Hs[r, c+1]$ is above the diagonal.
    As in Case 2a, each of $\Dc[r, c]$ and $\Dc[r + 1, c + 1]$ is either mixed or constant $0$. If any of them is mixed, then $A$ is mixed as well and we are done; so assume that that both $\Dc[r, c]$ and $\Dc[r + 1, c + 1]$ are constant $0$.
    Now, if $A$ were horizontal or vertical, then both $\Dc[r+1,c]$ and $\Dc[r,c+1]$ would be constant $0$ as well, implying that $C$ would be constant $0$. This is a contradiction with $C$ being a corner.
    
    \smallskip
    This finishes the proof.
  \end{proof}
Naturally, a statement symmetric to Lemma~\ref{lem:almost-minor-lifting} applies to $M_\Dc^\Vs$. Thus:

\begin{corollary}
  Suppose $M$ is a graphic matrix that is $d$-almost mixed free for some $d\geq 2$, and $\Dc$ is a symmetric division of $M$. Then both $M_\Dc^\Hs$ and $M_\Dc^\Vs$ are $d$-almost mixed free.
\end{corollary}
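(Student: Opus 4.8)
The plan is to derive the corollary directly from Lemma~\ref{lem:almost-minor-lifting} and its vertical counterpart by a short contradiction argument; all the substantive work has already been done in the lemma, so nothing beyond bookkeeping remains.

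First I would treat $M_\Dc^\Hs$. Suppose towards a contradiction that $M_\Dc^\Hs$ is \emph{not} $d$-almost mixed free, i.e.\ it admits some $d$-almost mixed minor $\Ec$. Observe that $M_\Dc^\Hs$ is itself a graphic matrix, being by definition the adjacency matrix of the graph $G_\Dc^\Hs$; hence the hypotheses of Lemma~\ref{lem:almost-minor-lifting} are met (here we use the assumption $d\geq 2$). Applying the lemma produces a $d$-almost mixed minor $\Lc$ of $M$. There is no need to worry that $\Ec$ or $\Lc$ might fail to be symmetric, since being $d$-almost mixed free places no symmetry requirement on the forbidden minor: the mere existence of $\Lc$ already contradicts the assumption that $M$ is $d$-almost mixed free. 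Therefore $M_\Dc^\Hs$ is $d$-almost mixed free.

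For $M_\Dc^\Vs$, I would run the identical argument using the statement symmetric to Lemma~\ref{lem:almost-minor-lifting} noted immediately after its proof. Concretely, transposing $M$ swaps the roles of ``horizontal'' and ``vertical'' throughout (and also interchanges the ``above the diagonal'' and ``below the diagonal'' cases of the case analysis), while leaving the symmetric division $\Dc$ unchanged; so a $d$-almost mixed minor of $M_\Dc^\Vs$ lifts to a $d$-almost mixed minor of $M$, again contradicting $d$-almost mixed freeness of $M$.

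The only step deserving a moment's care — the closest thing here to an obstacle — is verifying that the compressions indeed satisfy the hypotheses of the lifting lemma, namely that $M_\Dc^\Hs$ and $M_\Dc^\Vs$ are graphic; this is immediate from their definition as adjacency matrices of the graphs $G_\Dc^\Hs$ and $G_\Dc^\Vs$. Everything else is a purely formal consequence of the already-established Lemma~\ref{lem:almost-minor-lifting}.
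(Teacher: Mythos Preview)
Your proof is correct and essentially identical to the paper's (implicit) argument: the paper simply states the corollary immediately after Lemma~\ref{lem:almost-minor-lifting} and its symmetric counterpart, treating it as the obvious contrapositive. One minor note: the hypothesis of Lemma~\ref{lem:almost-minor-lifting} requires $M$ (not $M_\Dc^\Hs$) to be graphic, which is already given---so your verification that $M_\Dc^\Hs$ is graphic is unnecessary, though harmless.
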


The next lemma is the main outcome of this section. It shows that if $M$ is the adjacency matrix of a graph $G$ (with respect to some vertex ordering) and $M$ has no large almost mixed minor, then the clique number in the compressions of $M$ are controlled in terms of the clique number of $G$.

\begin{lemma}
  \label{lem:clique_lemma}
  Let $G$ be a graph and denote $\omega=\clique{G}$. Let $M$ be the adjacency matrix of $G$ in some vertex ordering and let $\Dc$ be a symmetric division of $M$. Suppose $M$ has no $d$-almost mixed minor that is a coarsening of $\Dc$, for some $d\geq 1$.
  Then $$\clique{G_\Dc^\Hs} \leq 2 \binom{\omega + d - 2}{d - 1} \leq 2\omega^{d-1}.$$
\end{lemma}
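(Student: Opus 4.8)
The key object to understand is a clique in $G_\Dc^\Hs$, i.e., a set $S\subseteq[s]$ of row/column blocks of $\Dc$ such that every zone $\Dc[i,j]$ with $i,j\in S$, $i\neq j$, is non-zero horizontal (and $\Dc[j,i]$ is then non-zero vertical). The plan is to argue that if $|S|$ is large, then either we can extract a $d$-almost mixed minor of $M$ that is a coarsening of $\Dc$ (contradiction), or we can read off a large clique in $G$ itself. The intuition is that a "non-zero horizontal" zone $\Dc[i,j]$ has a row that is all $1$s, which means some vertex in block $i$ is adjacent to \emph{every} vertex in block $j$; chaining such statements across $S$ should build a clique in $G$.

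First I would set up the counting. For each ordered pair $(i,j)$ with $i<j$ in $S$, since $\Dc[i,j]$ is non-zero horizontal, it has a constant-$1$ row; pick a witnessing vertex $a_{i}^{j}$ in block $i$ (a row index) that is complete towards block $j$. The difficulty is that a single vertex $a_i^j$ depends on $j$, so to build an actual clique I need, within each block $i\in S$, a \emph{single} vertex that is complete towards many later blocks simultaneously. To get this I would iteratively thin out $S$: process the blocks of $S$ in increasing order; at block $i$, among the vertices of block $i$, classify each later block $j\in S$ according to which "part" of block $i$'s vertex set is complete towards it — but more cleanly, I would instead argue contrapositively and let the almost-mixed-minor hypothesis do the work. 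Concretely, if the clique $S$ in $G_\Dc^\Hs$ has size at least $2\binom{\omega+d-2}{d-1}+1$, I want to derive either a $d$-almost mixed coarsening of $\Dc$ or an $(\omega+1)$-clique in $G$.

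The cleanest route is an extremal/Ramsey-type argument with the binomial bound as the giveaway. Consider the blocks of $S$ in order $i_1<i_2<\cdots<i_m$. For each block $i_k$ and each later block $i_\ell$ ($\ell>k$), non-zero horizontality of $\Dc[i_k,i_\ell]$ gives a row of block $i_k$ complete towards $i_\ell$; dually, non-zero verticality of $\Dc[i_\ell,i_k]$ (same zone transposed) gives a column, i.e.\ a vertex of $i_\ell$ that, viewed as a row, is complete towards... no — I should be careful about the direction. Let me instead use both: horizontal $\Dc[i_k,i_\ell]$ says some vertex of $i_k$ is complete toward $i_\ell$; symmetrically (looking at the pair from $i_\ell$'s side, where the zone $\Dc[i_\ell,i_k]$ is non-zero vertical) some vertex of $i_\ell$ is complete toward $i_k$. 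I would then greedily build a clique: having chosen vertices $v_1\in i_{k_1},\dots,v_r\in i_{k_r}$ forming a clique with the property that each $v_p$ lies in a sub-block of $i_{k_p}$ still complete toward all subsequent selected blocks, I extend by choosing the next index and the next vertex inside the surviving part. Each extension either succeeds — increasing the clique in $G$ — or fails because the "complete toward" relation splits a block into pieces, and tracking these splits across the at most $d-1$ levels before an almost mixed minor appears is exactly what produces the factor $\binom{\omega+d-2}{d-1}$ (choosing, for each of up to $\omega$ clique-growth steps, how the $d-1$ "refinement coordinates" are distributed — a stars-and-bars count). The factor $2$ absorbs the horizontal-versus-vertical (equivalently, below-versus-above-diagonal) dichotomy. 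When the process is blocked from both building a bigger $G$-clique and refining further, one collects $d$ blocks each of which is internally mixed in its interaction with the others, i.e.\ a $d$-almost mixed minor coarsening $\Dc$.

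The main obstacle, I expect, is making the induction bookkeeping precise: one must maintain simultaneously (a) a partial clique in $G$, (b) for each used block a shrinking convex sub-interval of its rows that remains complete toward all not-yet-used selected blocks, and (c) a certified almost-mixed substructure on the blocks where refinement was forced; and then verify that the three counters trade off to give exactly $|S|\le 2\binom{\omega+d-2}{d-1}$. The inequality $\binom{\omega+d-2}{d-1}\le\omega^{d-1}$ at the end is the trivial estimate $\binom{\omega+d-2}{d-1}\le(\omega+d-2)^{d-1}/(d-1)!\le\omega^{d-1}$ when $d\le\omega$, with the small-$d$ or small-$\omega$ corner cases checked directly.
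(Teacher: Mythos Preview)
Your plan captures the right dichotomy --- a large clique in $G_\Dc^\Hs$ should force either a large clique in $G$ or a $d$-almost mixed minor coarsening $\Dc$ --- but the mechanism you propose does not actually produce the second outcome, and this is a genuine gap rather than just missing bookkeeping.

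The object you must exhibit to reach a contradiction is a \emph{coarsening} of $\Dc$: a grouping of \emph{whole} blocks of $\Dc$ into $d$ consecutive super-blocks so that every off-diagonal super-zone is mixed. Your greedy process, by contrast, operates \emph{inside} individual blocks: you track ``shrinking convex sub-intervals'' of rows within a block that remain complete towards other blocks. That is refinement, not coarsening, and nothing in your description explains how repeated failures of the greedy step aggregate into a partition of the index set $S$ into $d$ contiguous groups with mixed cross-zones. The sentence ``one collects $d$ blocks each of which is internally mixed in its interaction with the others'' is where the proof would need to happen, and it is not there. Relatedly, your explanation of the factor $2$ (``horizontal versus vertical dichotomy'') and of why the count is exactly $\binom{\omega+d-2}{d-1}$ via stars-and-bars are both placeholders rather than arguments.

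The paper's proof avoids this by a clean double induction on $(\omega,d)$, exploiting the identity
\[
\mu(\omega,d)=\mu(\omega-1,d)+\mu(\omega,d-1)+1,\qquad \mu(\omega,d)\coloneqq 2\binom{\omega+d-2}{d-1}-1.
\]
Given a clique of size $\mu(\omega,d)+1$ in $G_\Dc^\Hs$, split it into a prefix $Y$ of the first $\mu(\omega,d-1)+1$ indices and a suffix $Z$ of the last $\mu(\omega-1,d)+1$. Induction on $\omega$ applied to $Z$ gives $\omega(G[Z])\geq\omega$, so no vertex of $Y$ is complete towards all of $Z$. Combined with every zone $\Dc[i,j]$ (for $i\in Y$, $j\in Z$) being non-zero horizontal, this forces the strip $\Dc[i,\,Z]$ to be mixed for each $i\in Y$: it cannot be horizontal (else some row of block $i$ is all ones over $Z$), and it cannot be vertical (it contains a non-constant horizontal zone). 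Hence any $(d-1)$-almost mixed coarsening of $\Dc$ restricted to $Y$ could be extended by appending $Z$ as a single final super-block to a $d$-almost mixed coarsening of $\Dc$, contradicting the hypothesis. So no such $(d-1)$-coarsening exists on $Y$, and induction on $d$ applied to $G[Y]$ yields $\omega(G[Y])>\omega$, a contradiction.

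Note how the coarsening appears: not by accumulating local failures, but by designating one large contiguous group $Z$ as a single super-block and letting the inductive hypothesis on $d$ handle the rest. If you want to rescue a direct argument, that is the structural move you are missing.
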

  \begin{proof}
  Let 
  $$\mu(\omega,d)=2\binom{\omega + d - 2}{d - 1} - 1.$$
  It can be easily verified that $\mu(\omega,d)$ satisfies the following recursive definition:
    \begin{itemize}[nosep]
     \item $\mu(1, \cdot) = \mu(\cdot, 1) = 1$; and
\item $\mu(\omega, d) = \mu(\omega - 1, d) + \mu(\omega, d - 1) + 1$ for $\omega, d \geq 2$.
    \end{itemize}
    We now show that $\clique{G_\Dc^\Hs} \leq \mu(\omega, d)$ by induction on $\omega$ and $d$.
    
    If $\omega = 1$ or $d=1$, then $G$ must be edgeless. So $G_\Dc^\Hs$ is edgeless as well, implying that $\clique{G_\Dc^\Hs} \leq 1$. This resolves the base of the induction, so from now on assume that $d,\omega\geq 2$.
    
    Assume for contradiction that $\clique{G_\Dc^\Hs} > \mu(\omega, d)$.
    By restricting $G$ to a suitable induced subgraph,
    without loss of generality, we may assume that $G_\Dc^\Hs$ is a~complete graph with $k \coloneqq \mu(\omega, d) + 1$ vertices. Note that this means that for each $1\leq i<j\leq k$, the zone $\Dc[i,j]$ is non-zero horizontal.
    Since we assume that $\Dc$ is a~symmetric division, $\Dc$ yields a~partition of $V(G)$ into $k$ subsets $V_1, V_2, \dots, V_k$. These subsets are convex in the vertex ordering used in the construction of $M$, and are indexed naturally according to this vertex ordering.
    
    Let $\ell\coloneqq \mu(\omega, d-1) + 1$ and 
    $$Y \coloneqq V_1 \cup V_2 \cup \dots \cup V_{\ell}\qquad\textrm{and}\qquad Z \coloneqq V(G) \setminus Y.$$
    Note that $k=\mu(\omega, d) + 1=\mu(\omega,d-1)+\mu(\omega-1,d)+2$, hence $Y$ is the union of $\ell=\mu(\omega,d-1)+1$ parts $V_i$ and $Z$ is the union of $\mu(\omega-1,d)+1$ parts $V_i$.
    By induction assumption applied to the graph $G[Z]$ with division $\Dc$ restricted to blocks corresponding to the vertices of $Z$, we infer that
    $$\omega(G[Z]) > \omega - 1.$$
    This implies that there is no vertex $v \in Y$ that would be complete towards $Z$ (i.e., adjacent to every vertex of $Z$).
    Indeed, if this were the case, then $v$ together with the largest clique in $G[Z]$ would form a clique in $G$ of size at least $\omega+1$, a contradiction.
    
    Since no vertex of $Y$ is complete towards $Z$, for every $i \in [\ell]$ one can find $j(i) > \ell$ such that the zone $\Dc[i, j(i)]$ in $M$ is \emph{not} constant $1$.
    However, as $M_\Dc^\Hs[i, j(i)] = 1$, we know that $\Dc[i, j(i)]$ is non-zero horizontal.
    Hence, $\Dc[i, j(i)]$ is non-constant horizontal.
    
    For every $i \in [\ell]$, consider the submatrix $$R_i \coloneqq \Dc[i,\, \ell+1 \dots k].$$
    Note that $R_i$ contains the zone $\Dc[i,j(i)]$.
    This means that $R_i$ cannot be vertical, as its contiguous submatrix $\Dc[i, j(i)]$ is non-constant horizontal.
    It also cannot be horizontal, as then there would exist a vertex $v \in V_i\subseteq Y$ that would be complete towards $Z$.
    Therefore, $R_i$ is mixed.
    By symmetry, the submatrix $S_i \coloneqq  \Dc[\ell+1 \dots k,\, i]$ is mixed as well (Figure \ref{sfig:compression-mixed-strips}).
    
    \begin{figure}
    \centering
    \begin{subfigure}[b]{0.45\textwidth}
    \centering
    \begin{tikzpicture}[scale=0.9]

\foreach \x/\nx in {0.0/0.4, 0.4/0.9, 0.9/1.3, 1.3/1.7, 1.7/2.0, 2.0/2.3, 2.3/2.6, 2.6/3.0} {
  \draw[red, fill=red!80!white, draw opacity=0.5, fill opacity=0.2] (3.0, -\x) rectangle (6.5, -\nx);
  \draw[red, fill=red!80!white, draw opacity=0.5, fill opacity=0.2] (\x, -3.0) rectangle (\nx, -6.5);
}

\draw[draw=black, very thick] (0, 0) rectangle (6.5, -6.5);

\foreach \x in {0.4, 0.9, 1.3, 1.7, 2.0, 2.3, 2.6, 3.0, 3.4, 3.8, 4.4, 4.8, 5.2, 5.5, 5.9, 6.2} {
  \draw[black, dotted] (\x, 0) -- (\x, -6.5);
  \draw[black, dotted] (0, -\x) -- (6.5, -\x);
}
\draw[black] (3.0, 0) -- (3.0, -6.5);
\draw[black] (0, -3.0) -- (6.5, -3.0);

\node at (1.5, 0.5) {\small $Y$};
\node at (-0.5, -1.5) {\small $Y$};
\node at (4.75, 0.5) {\small $Z$};
\node at (-0.5, -4.75) {\small $Z$};

\end{tikzpicture}
    \caption{}
    \label{sfig:compression-mixed-strips}
    \end{subfigure}
    \hfill
    \begin{subfigure}[b]{0.45\textwidth}
    \centering
    \begin{tikzpicture}[scale=0.9]

\draw[draw=black, very thick] (0, 0) rectangle (6.5, -6.5);

\foreach \x in {0.4, 0.9, 1.3, 1.7, 2.0, 2.3, 2.6, 3.0, 3.4, 3.8, 4.4, 4.8, 5.2, 5.5, 5.9, 6.2} {
  \draw[black, dotted] (\x, 0) -- (\x, -6.5);
  \draw[black, dotted] (0, -\x) -- (6.5, -\x);
}

\node at (1.5, 0.5) {\small $Y$};
\node at (-0.5, -1.5) {\small $Y$};
\node at (4.75, 0.5) {\small $Z$};
\node at (-0.5, -4.75) {\small $Z$};

\tikzset{extline/.style={red!70!black, very thick, dashed, dash pattern=on 3pt off 3pt}};

\foreach \xa/\xb/\ya/\yb in {0/1.3/0.9/1.7, 0/1.3/1.7/3.0, 1.3/2.3/0/0.9, 1.3/2.3/1.7/3.0, 2.3/3.0/0/0.9, 2.3/3.0/0.9/1.7} {
  \draw[fill=red!70!black, fill opacity=0.3, draw=none] (\xa,-\ya) rectangle (\xb,-\yb);
}
\foreach \xa/\xb/\ya/\yb in {3.0/6.5/0/0.9, 3.0/6.5/0.9/1.7, 3.0/6.5/1.7/3.0, 0/1.3/3.0/6.5, 1.3/2.3/3.0/6.5, 2.3/3.0/3.0/6.5} {
\draw[fill=red!80!white, fill opacity=0.2, draw=none] (\xa,-\ya) rectangle (\xb,-\yb);
}

\foreach \x in {1.3, 2.3, 3.0} {
  \draw[red!70!black, thick] (\x, 0) -- (\x, -3.0);
  \draw[extline] (\x, -3.0) -- (\x, -6.5);
}
\foreach \x in {0.9, 1.7, 3.0} {
  \draw[red!70!black, thick] (0, -\x) -- (3.0, -\x);
  \draw[extline] (3.0, -\x) -- (6.5, -\x);
}
\draw[extline] (3.0, 0) -- (3.0, -6.5);
\draw[extline] (0, -3.0) -- (6.5, -3.0);

\end{tikzpicture}
    \caption{}
    \label{sfig:compression-almost-mixed-minor}
    \end{subfigure}
    \caption{Setup in the proof of Lemma~\ref{lem:clique_lemma}. \\
    (a) A~symmetric division $\Dc$ of matrix $M$ (dotted). Each red horizontal strip ($R_i$) and each red vertical strip $(S_i$) is mixed. \\
    (b) Each $(d-1)$-almost mixed minor of $M'$ that is a~coarsening of $\Dc$ (dark red) can be extended to a~$d$-almost mixed minor of $M$ by adding $Z$ as the~final row and column block.
    }
    \label{fig:compression-setup}
    \end{figure}
    
    Let $$M' \coloneqq \Dc[1 \dots \ell,\ 1 \dots \ell]$$ be the adjacency matrix of $G[Y]$ in the vertex ordering inherited from $G$.
    We observe that $M'$ cannot contain any $(d-1)$-almost mixed minor $\Ec$ that would be a coarsening of $\Dc$ (restricted to blocks corresponding to the vertices of $Y$). Indeed, otherwise we could form a~$d$-almost mixed minor $\Ec'$ of $M$ by adding to $\Ec$ one column block, spanning the suffix of columns of $M$ corresponding to the vertices of $Z$, and one row block, spanning the suffix of rows of $M$ corresponding to the vertices of $Z$ (Figure \ref{sfig:compression-almost-mixed-minor}).
    By the mixedness of $R_i$ and $S_i$ for each $i \in [\ell]$, we see that for each $j \in [d-1]$, the zones $\Ec'[d, j]$ and $\Ec'[j, d]$ would be mixed. So $\Ec'$ would be a $d$-almost mixed minor in $M$ that would be coarsening of $\Dc$, a contradiction. 
    
    As $\ell=\mu(\omega,d-1)+1$,
    we may now apply the induction assumption to the graph $G[Y]$ with division $\Dc$ restricted to blocks corresponding to the vertices of $Y$. We thus infer that
    $$\clique{G[Y]} > \omega.$$
    As $G[Y]$ is an induced subgraph of $G$, this is a contradiction that finishes the proof.
\end{proof}

We remark that a symmetric reasoning shows that the same conclusion applies also to~$G_\Dc^\Vs$: under the assumptions of Lemma~\ref{lem:clique_lemma}, we also have $\clique{G_\Dc^\Vs} \leq 2\omega^{d-1}$.

\subsection{Statement of the main lemma}\label{sec:main-lemma-statement}

With auxiliary tools prepared, we can proceed to the main result of this section. Let $f_d\colon \Z_{>0}\to \Z_{>0}$ be defined as follows: for $\omega\in \N$, $f_d(\omega)$ is the maximum chromatic number among graphs that admit a vertex ordering yielding a $d$-almost mixed free adjacency matrix.
Note that by~Theorem~\ref{thm:tww-mm} and~\cite[Theorem~4]{tww3}, $f_d(\omega)$ is finite for all $\omega\in \N$. We have the following easy observation.

\begin{lemma}\label{lem:tww-f}
 Every graph of twin-width at most $t$ and clique number $\omega$ has chromatic number at most~$f_{4t+4}(\omega)$. 
\end{lemma}
\begin{proof}
 Every graph of twin-width at most $t$ has a vertex ordering that yields a $(2t+2)$-mixed-free adjacency matrix~(Theorem~\ref{thm:tww-mm}), and every $(2t+2)$-mixed-free matrix is also $(4t+4)$-almost mixed-free. Given this, the claim follows from the definition of $f_{4t+4}(\omega)$.
\end{proof}

For convenience, let us extend the domain of $f_d$ to $\R_{>0}$ by setting $f_d(x)=f_d(\lceil x\rceil)$ for every positive non-integer $x$.
The main step towards a better bound on $f_d$ is the recurrence provided by the following lemma.

\begin{lemma}\label{lem:main}
 Let $d,\omega,k$ be integers satisfying $d\geq 3$, $\omega \geq 5$, and $1\leq k<\omega/4$. Then there exists a constant $C_d$ depending only on $d$ such that
 \begin{equation}
  \label{eq:main}
  f_d(\omega) \leq f_d(\omega - k) + C_d \left[ f_d(\omega -k) + 8C_df_{d-1}(2\omega^{d-1})^2 \cdot \sum_{u=0}^{\left\lfloor \log_2 k\right\rfloor} f_d(2^{u + 1}) \cdot f_d\left(\frac{2k}{2^u} + 1\right) \right].
\end{equation}
\end{lemma}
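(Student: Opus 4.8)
The plan is to set up a balanced-separator-style recursion on the adjacency matrix $M$ of a graph $G$ with a $d$-almost mixed-free vertex ordering, $\omega = \omega(G)$, aiming to color $G$ by combining colorings of smaller pieces. First I would fix a fine symmetric division $\Dc$ of $M$ into singleton-ish blocks and consider the mixed compression $G_\Dc^\Ms$; by Lemma~\ref{lem:mixed-degenerate} it has chromatic number at most $C_d$, so its vertices (the blocks of $\Dc$) split into $C_d$ classes, each class inducing a zero pattern among the corresponding blocks — i.e.\ within a class, every pair of blocks is either complete or anti-complete, hence the union of a class of blocks induces a graph whose ``block structure'' is captured by a horizontal compression. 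This is where the factor $C_d$ in front of the bracket, and the appearance of $f_{d-1}(2\omega^{d-1})$ (coming from Lemma~\ref{lem:clique_lemma} and the corollary on $d$-almost mixed-freeness of compressions), will enter: on each mixed-free-compression piece we may recurse in $d$ via $f_{d-1}(2\omega^{d-1})$, while the clique number there is controlled polynomially in $\omega$.

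Second, the core combinatorial step: inside a class where blocks interact only via complete/anti-complete pairs, the structure is essentially that of a graph $H$ of bounded clique number on the blocks, and I would use a Dilworth/greedy layering or an interval-type argument along the vertex order to cut $G$ into a ``heavy'' part whose clique number drops by $k$ — contributing the leading $f_d(\omega-k)$ term and one more $f_d(\omega-k)$ inside the bracket — and $O(\log k)$ ``dyadic'' layers indexed by $u = 0,1,\dots,\lfloor\log_2 k\rfloor$. The layer at scale $u$ should be arranged so that it decomposes into pieces each of which is simultaneously colorable using $f_d(2^{u+1})$ colors (a piece of ``width'' $\approx 2^{u+1}$ in the order) and $f_d(2k/2^u + 1)$ colors (roughly $2k/2^u$ many such pieces packed along a clique-bounded transversal), and the product of these two quantities, summed over $u$ and multiplied by the $8C_d f_{d-1}(2\omega^{d-1})^2$ prefactor, is exactly the sum on the right-hand side of \eqref{eq:main}. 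The hypotheses $d\ge 3$, $\omega\ge 5$, $1\le k<\omega/4$ are what make these cuts legal: $k<\omega/4$ guarantees $\omega-k$ is still a meaningful (and strictly smaller) clique bound and that the dyadic scales stay below $\omega$, and $\omega\ge5$, $d\ge3$ keep the base cases and the $f_{d-1}(2\omega^{d-1})$ invocation well-defined.

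Third, I would assemble the coloring by a product construction: a color of a vertex $v$ is the tuple consisting of (i) the $C_d$-coloring of its block in $G_\Dc^\Ms$, (ii) within its class, which of the heavy/dyadic parts it lands in, and (iii) the relevant recursive colors $f_d(\omega-k)$, $f_d(2^{u+1})$, $f_d(2k/2^u+1)$, $f_{d-1}(2\omega^{d-1})$ of the piece containing $v$. One checks that two adjacent vertices always differ in some coordinate: if they lie in blocks of different $G_\Dc^\Ms$-colors we are done by (i); if in the same class, either they are separated by the heavy/dyadic partition (coordinate (ii)) or they sit in a common recursively-colored piece and differ there. Counting the tuples gives exactly the right-hand side of \eqref{eq:main}, with the constant $C_d$ absorbing both the mixed-compression bound and the bookkeeping overhead.

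The main obstacle I anticipate is the second step — engineering the dyadic layering so that each scale-$u$ layer genuinely factors as an ``$f_d(2^{u+1})$ colors times $f_d(2k/2^u+1)$ colors'' product while the leftover heavy part really has clique number at most $\omega-k$. This requires exploiting the convexity of blocks in the vertex order together with the complete/anti-complete dichotomy on a mixed-compression class, probably via an auxiliary potential (a count of blocks, or of clique-witnessing blocks, crossed as one scans the order) and a careful choice of which scale absorbs which ``crossing'' blocks; getting the constants to land at precisely $2^{u+1}$ and $2k/2^u+1$ (rather than merely $O(\cdot)$) is the delicate part, and is presumably why the lemma is stated with these exact expressions and with a slack constant $C_d$ everywhere else. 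Routine but nontrivial side-checks — that all recursive arguments of $f_d$ and $f_{d-1}$ are at most the claimed values, and monotonicity/extension of $f_d$ to $\R_{>0}$ — I would dispatch along the way.
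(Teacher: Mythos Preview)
Your proposal has the right high-level shape (mixed-compression for the outer $C_d$, a dyadic bucketing for the sum over $u$, a product coloring), but it contains a concrete error and is missing the mechanisms that actually produce each term. The error: after applying Lemma~\ref{lem:mixed-degenerate} you assert that within a color class ``every pair of blocks is either complete or anti-complete.'' That is false --- non-mixed only means horizontal, vertical, or constant, and constant-$1$ is in fact \emph{impossible} here (it would give $\omega(G)\ge 2(\omega-k)>\omega$). So the typical inter-block zone is a genuinely non-constant horizontal or vertical matrix; handling these semi-pure connections is the core of the argument and is exactly why $f_{d-1}(2\omega^{d-1})$ appears \emph{squared} (one factor for a horizontal compression and one for a vertical compression of a finer structure, not of your original $\Dc$). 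Also, $\Dc$ is not ``singleton-ish'': the blocks are \emph{blobs} $B_1,\dots,B_{m+1}$, greedily chosen as maximal $\le$-prefixes with $\omega(G[B_i])=\omega-k$. The two $f_d(\omega-k)$ terms come from (a) the leftover blob $B_{m+1}$ and (b) the set of ``poor'' vertices --- those not complete to any other blob --- which, precisely because the inter-blob zones are non-mixed, is independent across distinct blobs.

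The dyadic sum is not produced by an interval/Dilworth layering of the order. Inside each blob one forms \emph{subblobs}: maximal runs of rich vertices that are twins with respect to $V(G)\setminus B_i$; richness forces each subblob to have clique number $\le k$, and $u$ is the bucket $\lfloor\log_2\omega(\cdot)\rfloor$ of that clique number (this is where $f_d(2^{u+1})$ comes from). The drop from $d$ to $d{-}1$ happens on the horizontal and vertical compressions of the \emph{subblob} division of a single blob: after an auxiliary $\{0,1\}^2\times\{\lft,\rgt\}$ refinement (recording adjacency to the global first/last vertex and on which side the twin-class changes), any $(d{-}1)$-almost mixed minor of that compression lifts via Lemma~\ref{lem:almost-minor-lifting} to a $d$-almost mixed minor of $M$ by adjoining all blobs on one side as an extra block. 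Finally, $f_d(2k/2^u+1)$ is not a transversal count: it comes from showing that each color class $F^t$ of the product coloring meets every blob in an independent set, and then a tournament/pigeonhole argument on the semi-pure directions between blobs forces $\omega(G[F^t])\le 2\lfloor k/2^u\rfloor+1$. Your sketch does not supply any of these four ingredients, and without them the recursion cannot be closed with the stated bound.
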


The remainder of this section is devoted to the proof of Lemma~\ref{lem:main}. For this, fix $d,\omega,k$ as in the premise of Lemma~\ref{lem:main}, as well as a graph $G$ that, under some vertex ordering $\leq$, admits a $d$-almost mixed-free adjacency matrix $M$. Our goal is to construct a proper coloring of $G$ with the number of colors bounded by the right hand side of~\eqref{eq:main}. The construction is obtained by a sequence of {\em{coloring steps}}, each of which constructs a part of the coloring and reduces the remaining task to a simpler one.

For simplicity we may assume that $V(G) = [n]$ and $\leq$ is the standard order on $[n]$, so that vertices are equal to the indices of their rows and columns in $M$.

%
%
%
%

\subsection{Forming blobs and simplifying connections}

The first step of the construction is to partition the vertex set of $G$ into parts, called {\em{blobs}}, which are significantly simpler in terms of the clique number.
Formally, we construct blobs $B_1,B_2,\ldots,B_{m+1}$ by an inductive procedure as follows. Supposing $B_1,\ldots,B_{i-1}$ are already defined for some $i\geq 1$, we let $B_{i}$ be the smallest prefix of $V(G)\setminus \bigcup_{j=1}^{i-1} B_{j}$ in the ordering $\leq$ that satisfies $\clique{G[B_i]}\geq \omega-k$. If no such prefix exists, we finish the construction by setting $m\coloneqq i-1$ and $B_{m+1}\coloneqq V(G)\setminus \bigcup_{j=1}^{i-1} B_{j}$. Since adding one vertex can increase the clique number by at most $1$, it is easy to see that the blobs satisfy the following assertions:
\begin{itemize}[nosep]
 \item Blobs $B_1,\ldots,B_{m+1}$ form a partition of $V(G)$.
 \item  Each blob is {\em{convex}} in the ordering $\leq$, that is, its vertices form an interval in $\leq$. Moreover, the blobs are ordered by $\leq$ in according to their indices: if $1\leq i<j\leq m+1$, then $u<v$ for all $u\in B_i$ and $v\in B_j$.
 \item For each $i\in [m]$ we have $\clique{G[B_i]} = \omega - k$.
 \item We have $\clique{G[B_{m+1}]} \leq \omega - k$.
\end{itemize}

The first step in constructing a coloring of $G$ is to resolve the part $G[B_{m+1}]$.

\begin{coloring}
  Color $G[B_{m+1}]$ using a~separate palette of $f_d(\omega - k)$ colors; such a proper coloring exists due to $\clique{G[B_{m+1}]}\leq \omega-k$.
  From now on, we may disregard $B_{m+1}$ from further considerations.
  That is, our goal is to properly color $G[B_1 \cup B_2 \cup \dots \cup B_m]$ using
  \[
  C_d \left[ f_d(\omega -k) + 8C_df_{d-1}(2\omega^{d-1})^2 \cdot \sum_{u=0}^{\left\lfloor \log_2 k\right\rfloor} f_d(2^{u + 1}) \cdot f_d\left(\frac{2k}{2^u} + 1\right)\right]\ \ \text{colors}.
  \]
\end{coloring}

For a subset of rows $X$ and a subset of columns $Y$, by $M[X,Y]$ we denote the matrix obtained from $M$ by deleting all rows not belonging to $X$ and all columns not belonging to~$Y$. We sometimes call $M[X,Y]$ the {\em{connection}} between $X$ and $Y$.
The next goal is to resolve mixed connections between the blobs. This is easy thanks to Lemma~\ref{lem:mixed-degenerate}.

\begin{lemma}
  \label{lem:marcus-tardos-degeneracy}
Let $C_d$ be the constant provided by Lemma~\ref{lem:mixed-degenerate}. Then there exists a~partition $[m] = A_1 \cup A_2 \cup \dots \cup A_{C_d}$ into (not necessarily convex) subsets so that for any given $i \in [C_d]$ and a~pair $b, c \in A_i$ with $b \neq c$, the matrix $M[B_b,B_c]$ is \emph{not} mixed.
\end{lemma}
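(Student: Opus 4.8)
The plan is to apply Lemma~\ref{lem:mixed-degenerate} to the division of the adjacency matrix induced by the blobs, and to read off the desired partition from a proper coloring of the resulting mixed compression.

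First I would pass to the relevant submatrix. Let $B \coloneqq B_1 \cup B_2 \cup \dots \cup B_m$ and let $M'$ be the adjacency matrix of $G[B]$ in the ordering inherited from $\leq$, so that $M' = M[B,B]$. Since $M$ is $d$-almost mixed-free it is in particular $d$-mixed-free, and $d$-mixed-freeness is inherited by submatrices, so $M'$ is $d$-mixed-free as well. Each blob $B_i$ is convex in $\leq$ and the blobs occur along $\leq$ in the order of their indices, so the partition of $B$ into $B_1, \dots, B_m$ constitutes a symmetric $m$-division $\Dc$ of $M'$, with $\Dc[b,c] = M[B_b, B_c]$ for all $b, c \in [m]$.

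Next I would invoke Lemma~\ref{lem:mixed-degenerate} on the $d$-mixed-free graphic matrix $M'$ together with its symmetric division $\Dc$; this yields $\chi(G^\Ms_\Dc) \le C_d$, where $C_d$ is exactly the constant named in the statement. By definition, $G^\Ms_\Dc$ is the graph on vertex set $[m]$ in which $b$ and $c$ are adjacent if and only if $b \neq c$ and the zone $\Dc[b,c] = M[B_b, B_c]$ is mixed. Fixing a proper coloring of $G^\Ms_\Dc$ with colors $1, \dots, C_d$ and letting $A_i$ be the color class of color $i$, the sets $A_1, \dots, A_{C_d}$ partition $[m]$ and each $A_i$ is an independent set of $G^\Ms_\Dc$; the latter says precisely that $M[B_b, B_c]$ is not mixed for every $i \in [C_d]$ and every $b, c \in A_i$ with $b \neq c$, which is the conclusion of the lemma.

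There is essentially no obstacle in this argument: the only points to verify are that the blobs genuinely form a symmetric division --- immediate from their convexity and their ordering along $\leq$ --- and that restricting from $M$ to the submatrix $M'$ preserves $d$-mixed-freeness, which is among the basic facts recalled in Section~\ref{sec:preliminaries}. The whole substance of the statement is carried by the Marcus--Tardos-based Lemma~\ref{lem:mixed-degenerate}.
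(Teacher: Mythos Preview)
Your proof is correct and follows exactly the approach of the paper: apply Lemma~\ref{lem:mixed-degenerate} to the submatrix $M[\bigcup_{i=1}^m B_i,\bigcup_{i=1}^m B_i]$ with its symmetric division into blobs, and let the $A_i$ be the color classes of a proper $C_d$-coloring of the resulting mixed compression. The paper's proof is a one-sentence version of the same argument.
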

\begin{proof}
 It suffices to apply Lemma~\ref{lem:mixed-degenerate} to the mixed compression of the matrix $M[\bigcup_{i=1}^m B_i,\bigcup_{i=1}^m B_i]$ along its division into blobs $B_1,\dots,B_m$.
\end{proof}

\begin{coloring}\label{step:mixed}
  Let $[m] = A_1 \cup A_2 \cup \dots \cup A_{C_d}$ be the partition provided by Lemma~\ref{lem:marcus-tardos-degeneracy}. Assign a separate palette of colors to each set $A_i$, $i\in [C_d]$. That is, supposing we properly color each graph $G[\bigcup_{a\in A_i} B_a]$ using
  \begin{equation}
  \label{eq:after-mixed-coloring}
  f_d(\omega -k) + 8C_df_{d-1}(2\omega^{d-1})^2 \cdot \sum_{u=0}^{\left\lfloor \log_2 k\right\rfloor} f_d(2^{u + 1}) \cdot f_d\left(\frac{2k}{2^u} + 1\right)\ \ \text{colors,}
  \end{equation}
  we may construct a proper coloring of $G[B_1\cup \dots \cup B_m]$ by taking the union of the colorings of $G[\bigcup_{a\in A_i} B_a]$, $i\in [m]$, on disjoint palettes.
\end{coloring}

For the simplicity of presentation, from now on we focus on a single set $A_i$. That is, by restricting attention to the induced subgraph $G[\bigcup_{a\in A_i} B_a]$ with vertex ordering inherited from $G$, we may assume that $V(G)=B_1\cup \dots \cup B_m$ and there are no mixed connections between any pair of different blobs.
Then our goal is to properly color $G$ using the number of colors given by \eqref{eq:after-mixed-coloring}.

Let $\Dc$ be the symmetric $m$-division of $M$ given by the partitioning into blobs.
Thus, $\Dc$ has no mixed zones outside of the main diagonal, so for $i,j\in [m]$, $i \neq j$, the connection $\Dc[i,j]=M[B_i,B_j]$ is of one of the following types:
\begin{itemize}
  \item \emph{Empty} if $\Dc[i, j]$ is constant $0$.
  \item \emph{Non-constant horizontal} if $\Dc[i,j]$ is horizontal but not constant. In graph-theoretic terms, this means that $B_i$ is semi-pure towards $B_j$. (Recall that this means that each vertex $v \in B_i$ is either complete or anti-complete towards $B_j$.)
  \item \emph{Non-constant vertical} if $\Dc[i,j]$ is vertical but not constant. Again, in graph-theoretic terms this means that $B_j$ is semi-pure towards $B_j$.
\end{itemize}
Here, note that the connection $\Dc[i,j]$ cannot be constant $1$. This is because then the pair $B_i,B_j$ would be complete, implying that
$$\clique{G}\geq \clique{G[B_i]}+\clique{G[B_j]}\geq 2(\omega-k)>\omega,$$
a contradiction.

Given a vertex $v$, say $v\in B_i$ for some $i\in [m]$, we say that $v$ is \emph{rich} if it is complete towards any other blob $B_j$, $j\neq i$. Otherwise, we say that $v$ is \emph{poor}. We next observe that poor vertices can be only adjacent within single blobs.

\begin{lemma}\label{lem:poor-nonadjacent}
  Suppose $u$ and $v$ are poor vertices such that $u\in B_i$ and $v\in B_j$ for some $i\neq j$. Then $u$ and $v$ are non-adjacent.
\end{lemma}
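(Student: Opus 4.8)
The plan is to reduce everything to the classification of inter-blob connections that we have just established. Write $u\in B_i$, $v\in B_j$ with $i\neq j$; without loss of generality $i<j$, and look at the connection $\Dc[i,j]=M[B_i,B_j]$. We are in the situation where no two distinct blobs have a mixed connection, and a constant-$1$ connection between distinct blobs is impossible (it would give $\clique{G}\geq 2(\omega-k)>\omega$), so $\Dc[i,j]$ is either empty, non-constant horizontal, or non-constant vertical. The whole proof is then a three-way case check.

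If $\Dc[i,j]$ is empty, there are no edges between $B_i$ and $B_j$, so $u$ and $v$ are non-adjacent and we are done. Otherwise, suppose toward a contradiction that $uv\in E(G)$. If $\Dc[i,j]$ is horizontal, then the row of $\Dc[i,j]$ indexed by $u$ is constant, and since it contains the $1$-entry corresponding to the edge $uv$, it is the all-ones row; hence $u$ is complete towards $B_j$. Since $j\neq i$, this witnesses that $u$ is rich, contradicting the hypothesis that $u$ is poor. Symmetrically, if $\Dc[i,j]$ is vertical, the column of $\Dc[i,j]$ indexed by $v$ is all-ones, so $v$ is complete towards $B_i$; since $i\neq j$, this makes $v$ rich, again contradicting the hypothesis. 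In all cases $u$ and $v$ are non-adjacent.

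I do not expect any real obstacle: the argument is a direct consequence of the reduction "no mixed inter-blob connections" together with the definition of \emph{rich}/\emph{poor}. The only point requiring a moment's care is to match the orientation of the connection with the correct vertex — a horizontal connection makes the \emph{row} vertex $u$ complete towards the other blob, a vertical one makes the \emph{column} vertex $v$ complete towards the other blob — and to note that in both sub-cases the blob in question is genuinely "other" precisely because $i\neq j$.
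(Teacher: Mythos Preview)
Your proof is correct and follows essentially the same argument as the paper: assume adjacency, use that the connection $\Dc[i,j]$ is horizontal or vertical (and not constant $0$), and deduce that one of $u,v$ is complete towards the other blob, contradicting poorness. The only cosmetic difference is that you handle the empty case separately at the outset, whereas the paper folds it into the contradiction by first assuming $M[u,v]=1$.
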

  \begin{proof}
    Assume otherwise.
    Then $M[u,v] = M[v,u] = 1$.
    Thus, the zone $\Dc[i, j]$ is not constant~$0$ due to containing $M[u,v]$.
    If it was horizontal, then $u$ would be pure towards the entire blob $B_j$, so $u$ would be rich.
    Symmetrically, if $\Dc[i,j]$ was vertical, then $v$ would be rich. 
    Since both $u$ and $v$ are assumed to be poor, we have a~contradiction.
  \end{proof}

Let $Z$ be the set of all poor vertices. By Lemma~\ref{lem:poor-nonadjacent}, we see that $G[Z]$ is the~disjoint union of graphs $G[Z \cap B_1], \dots, G[Z \cap B_m]$. It follows that $\clique{G[Z]}=\omega - k$.

\begin{coloring}
  Color $Z$ using a~separate palette of $f_d(\omega - k)$ colors; this can be done due to $\clique{G[Z]}=\omega - k$.
  It remains to properly color the graph $G - Z$ using
  \[
  8C_df_{d-1}(2\omega^{d-1})^2 \cdot \sum_{u=0}^{\left\lfloor \log_2 k\right\rfloor} f_d(2^{u + 1}) \cdot f_d\left(\frac{2k}{2^u} + 1\right)\ \ \text{colors}.
  \]
\end{coloring}

For each $i\in [m]$, let $B'_i \coloneqq B_i \setminus Z$. Intuitively, our next goals are to first construct a~proper coloring of each subgraph $G[B'_i]$ separately, and then use the coloring of the vertices within the blobs to resolve the semi-pure interblob connections.

\newcommand{\fst}{\mathsf{first}}
\newcommand{\lst}{\mathsf{last}}
\newcommand{\lft}{\mathsf{L}}
\newcommand{\rgt}{\mathsf{R}}

\subsection{Forming and analyzing subblobs}

Fix $i \in [m]$ and consider the set $B'_i$ with the ordering inherited from $G$.
We now want to find a~proper coloring of $G[B'_i]$.
We partition $B'_i$ into four (not necessarily convex) subsets $B'_{i,x,y}$ with $x, y \in \{0, 1\}$.
We put each $v \in B'_i$ into $B'_{i,x,y}$ for $(x,y)$ defined as follows:
\begin{itemize}
  \item $x=M[v,\fst]$, where $\fst=1$ is the first vertex of $G$ in the ordering $\leq$; and
  \item $y=M[v,\lst]$, where $\lst=|V(G)|$ is the last vertex of $G$ in the ordering $\leq$.
\end{itemize}

Now, fix $x, y \in \{0, 1\}$ for a moment.
We partition $B'_{i,x,y}$ into sets $I_{i,x,y}^1 \cup I_{i,x,y}^2 \cup \dots \cup I_{i,x,y}^t$, called \emph{subblobs}, by induction as follows.
Assuming $I_{i,x,y}^1,\dots,I_{i,x,y}^{j-1}$ are already defined, $I_{i,x,y}^j$ is the largest prefix of $\leq$ restricted to $B'_{i,x,y}\setminus \bigcup_{s=1}^{j-1} I_{i,x,y}^s$ with the following property: all vertices of $I_{i,x,y}^j$ are {\em{twins}} with respect to $V(G)\setminus B_i$ (that is, in $G$ they have exactly the same neighborhood in $V(G)\setminus B_i$). The construction finishes when every vertex of $B'_{i,x,y}$ is placed in a subblob. Note that subblobs $I_{i,x,y}^j$ are not necessarily convex in $\leq$, but they are convex in $\leq$ restricted to $B'_{i,x,y}$, and they are ordered naturally by $\leq$: $w<w'$ for all $w\in I_{i,x,y}^{j}$ and $w'\in I_{i,x,y}^{j'}$ with $j<j'$ (Figure~\ref{fig:subblob-example}).
Note that we require that the vertices within every subblob are twins with respect to all blobs different than $B_i$ in the entire graph $G$ (where $G$ contains both rich and poor vertices).

We observe that every subblob induces a graph of small clique number.

\begin{lemma}\label{lem:subblob-clique}
  For each $j \in [t]$, we have $\clique{G[I_{i,x,y}^j]} \leq k$.
\end{lemma}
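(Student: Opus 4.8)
The plan is to show that every subblob $I:=I_{i,x,y}^{j}$ is, as a whole, complete towards some \emph{other} blob $B_\ell$ of clique number exactly $\omega-k$; then a clique in $G[I]$ together with a maximum clique of $G[B_\ell]$ assembles into a clique of $G$, which forces $\clique{G[I]}\le k$.

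First I would dispose of the trivial case $I=\emptyset$ and otherwise fix some $v\in I$. Since $I\subseteq B'_i=B_i\setminus Z$, the vertex $v$ was not removed together with the poor vertices, so $v$ is rich; by the definition of richness there is a blob $B_\ell$ with $\ell\neq i$ such that $v$ is complete towards $B_\ell$. Note that $\ell\in[m]$, because after the earlier reductions we have $V(G)=B_1\cup\dots\cup B_m$, and consequently $\clique{G[B_\ell]}=\omega-k$. Next I invoke the defining property of subblobs: all vertices of $I$ are twins in $G$ with respect to $V(G)\setminus B_i$, hence in particular with respect to $B_\ell\subseteq V(G)\setminus B_i$. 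Therefore every $w\in I$ has exactly the same neighbourhood inside $B_\ell$ as $v$, so every $w\in I$ is complete towards $B_\ell$ as well; that is, the pair $I,B_\ell$ is complete (these sets are disjoint since $i\neq\ell$). It is worth stressing that the twin property of subblobs is required to hold in the whole graph $G$, accounting also for poor vertices, so it legitimately applies to the full blob $B_\ell$.

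Finally, let $K$ be a maximum clique of $G[I]$ and $K_\ell$ a maximum clique of $G[B_\ell]$, so that $|K_\ell|=\clique{G[B_\ell]}=\omega-k$. Since $I$ and $B_\ell$ are disjoint and the pair $I,B_\ell$ is complete, every vertex of $K$ is adjacent to every vertex of $K_\ell$, and as both $K$ and $K_\ell$ are cliques, $K\cup K_\ell$ is a clique of $G$ with $|K\cup K_\ell|=|K|+(\omega-k)$. As $\clique{G}=\omega$, this yields $|K|\le k$, i.e.\ $\clique{G[I_{i,x,y}^{j}]}\le k$, as required.

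The argument is short and presents essentially no obstacle; the only points deserving attention are the observation that, because all vertices of a subblob share the same neighbourhood outside $B_i$ and at least one of them is rich, the entire subblob is complete towards one \emph{common} blob, and the bookkeeping fact that after the preceding coloring steps every surviving blob $B_1,\dots,B_m$ has clique number exactly $\omega-k$, which makes the clique-sum step go through cleanly.
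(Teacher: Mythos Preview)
Your proof is correct and follows essentially the same approach as the paper's: both argue that a subblob consists of rich twins with respect to $V(G)\setminus B_i$, hence the whole subblob is complete towards some common blob $B_\ell$ of clique number $\omega-k$, and then combine cliques. Your version simply spells out in greater detail what the paper compresses into two sentences.
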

  \begin{proof}
    Since $I_{i,x,y}^j$ consists of rich twins with respect to $G - B_i$, it follows that there exists some blob $B_{i'}$, $i'\neq i$, such that the pair $I_{i,x,y}^j,B_{i'}$ is complete. But we have $\clique{G[B_{i'}]} = \omega - k$, so it follows that $\clique{G[I_{i,x,y}^j]}\leq \omega-(\omega-k)=k$.
  \end{proof}
  
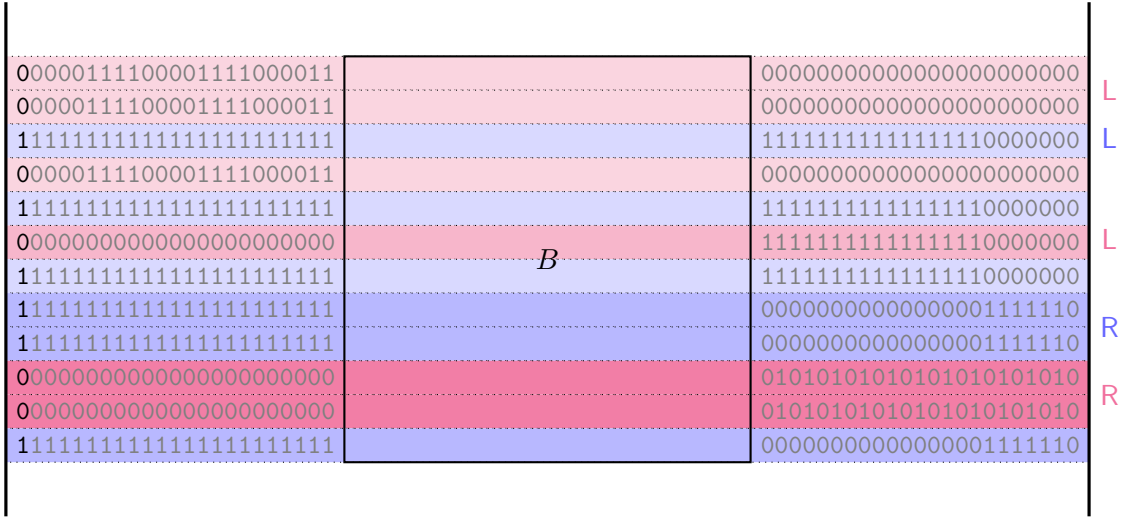
\begin{figure}
  \centering
  \begin{tikzpicture}[scale=0.9]

\newcommand{\colorrx}{RubineRed!20!white}
\newcommand{\colorry}{RubineRed!35!white}
\newcommand{\colorrz}{RubineRed!64!white}
\newcommand{\colorbx}{blue!15!white}
\newcommand{\colorby}{blue!28!white}

\foreach \y/\color in {0/\colorrx, -0.5/\colorrx, -1.0/\colorbx, -1.5/\colorrx, -2.0/\colorbx, -2.5/\colorry, -3.0/\colorbx, -3.5/\colorby, -4.0/\colorby, -4.5/\colorrz, -5.0/\colorrz, -5.5/\colorby} {
  \draw[dotted, fill=\color] rectangle (0, \y) rectangle ($(16, \y) - (0, 0.5)$);
}

\foreach \y/\a/\b/\c in {
    0.0/0/0000111100001111000011/00000000000000000000000,
    -0.5/0/0000111100001111000011/00000000000000000000000,
    -1.0/1/1111111111111111111111/11111111111111110000000,
    -1.5/0/0000111100001111000011/00000000000000000000000,
    -2.0/1/1111111111111111111111/11111111111111110000000,
    -2.5/0/0000000000000000000000/11111111111111110000000,
    -3.0/1/1111111111111111111111/11111111111111110000000,
    -3.5/1/1111111111111111111111/00000000000000001111110,
    -4.0/1/1111111111111111111111/00000000000000001111110,
    -4.5/0/0000000000000000000000/01010101010101010101010,
    -5.0/0/0000000000000000000000/01010101010101010101010,
    -5.5/1/1111111111111111111111/00000000000000001111110
    } {
  \node [label=below right:{\texttt{\small \a\textcolor{gray}{\b}}}] at ($(0, \y)+(-0.15, 0.18)$) {};
  \node [label=below right:{\texttt{\small \textcolor{gray}{\c}}}] at ($(11, \y)+(-0.15, 0.18)$) {};
}

\node at (16.3, -0.5) {\textcolor{RubineRed!70!white}{$\lft$}};
\node at (16.3, -1.2) {\textcolor{blue!60!white}{$\lft$}};
\node at (16.3, -2.7) {\textcolor{RubineRed!70!white}{$\lft$}};
\node at (16.3, -4.0) {\textcolor{blue!60!white}{$\rgt$}};
\node at (16.3, -5.0) {\textcolor{RubineRed!70!white}{$\rgt$}};

\draw[black, thick] (5, 0) rectangle (11, -6);
\node at (8, -3) {$B$};

\draw[black, very thick] (0, 0.8) -- (0, -6.8);
\draw[black, very thick] (16, 0.8) -- (16, -6.8);

\let\colorrx\undefined
\let\colorry\undefined
\let\colorrz\undefined
\let\colorbx\undefined
\let\colorby\undefined

\end{tikzpicture}
  \caption{A~blob $B$ with $12$ rich vertices and its partitioning into the subblobs.
  For simplicity, each vertex of $B$ has $y = 0$ (i.e., no vertices of $B$ are connected to $\lst$). \\
  The subblobs containing vertices with $x = 0$ are marked with different shades of red, and the subblobs containing vertices with $x = 1$ are marked with different shades of blue.
  For each subblob, its type $z \in \{\lft, \rgt\}$ is marked on the right of the matrix.}
  \label{fig:subblob-example}
\end{figure}

We now divide the subblobs into a logarithmic number of {\em{buckets}} according to the clique number of the subgraphs induced by them. We remark that this bucketing approach is inspired by the proof of Chudnovsky et al.~\cite{ChudnovskyPST13} that polynomial $\chi$-boundedness is preserved under closure by the substitution operation. Intuitively, the idea is to capture the tradeoff between the heaviness of the subblobs in terms of their clique number, and the sparseness of the graph of connections between the subblobs. These two quantities are respectively represented by the terms $f_d(2^{u + 1})$ and $f_d\left(\frac{2k}{2^u} + 1\right)$ in the right hand side of~\eqref{eq:main}, and clearly these two terms ``play against each other''.

Let $\ell \coloneqq \left\lfloor \log k \right\rfloor$.
We partition the subblobs $I_{i,x,y}^1, \dots, I_{i,x,y}^t$ into $2(\ell + 1)$ {\em{buckets}}
$\Sc_{i,x,y,z, u}$ for $z \in \{\lft, \rgt\}$ and $u \in \{0, 1, \dots, \ell\}$ using the following process.
For every $j \in [t]$, let $v_j$ be any vertex of $I_{i,x,y}^j$.
We put $I_{i,x,y}^j$ into the bucket $\Sc_{i,x,y,z, u}$ for $(z,u)$ defined as follows:
\begin{itemize}
  \item If $j = 1$ then $z = \lft$.
    Otherwise, that is for $j \geq 2$, we know that there exists a~vertex $a \in B_{i'}$ with $i' \neq i$ such that $M[v_j, a] \neq M[v_{j-1}, a]$.
    Pick any such vertex.
    If $i' < i$, then we put $z = \lft$; otherwise, put $z = \rgt$ (Figure \ref{fig:subblob-example}).
  \item $u$ is such that $2^u \leq \clique{G[I_{i,x,y}^j]} < 2^{u+1}$.
\end{itemize}
Observe that Lemma~\ref{lem:subblob-clique} ensures that every subblob is placed in a bucket.

If a~subblob $I_{i,x,y}^j \subseteq B'_{i,x,y}$ belongs to the bucket $\Sc_{i,x,y,z, u}$, we call $I_{i,x,y}^j$ an~\emph{$(x,y,z,u)$-subblob}. Let $W_{x,y,z,u}$ be the union of all $(x,y,z,u)$-subblobs in $G$, that is,
$$W_{x,y,z,u}=\bigcup_{i=1}^m \bigcup \Sc_{i,x,y,z, u}.$$
The idea is to assign a separate palette to every choice of $(x,y,z,u)$ as above.

\begin{coloring}\label{col:focus-xyzu}
  For each quadruple of parameters $(x, y, z, u)\in \{0,1\}^2\times \{\lft,\rgt\}\times \{0, 1, \dots, \ell\}$, we assign a~separate palette for coloring the subgraph $G[W_{x,y,z,u}]$ with
  \begin{equation}\label{eq:col-xyzu}
    C_d \cdot f_{d-1}(2\omega^{d-1})^2  \cdot f_d(2^{u+1}) \cdot f_d\left(\frac{2k}{2^u} + 1\right)\ \ \text{colors}.
  \end{equation}
  That is provided we properly color every subgraph $G[W_{x,y,z,u}]$ with that many colors, we can color the whole graph $G$ using the union of those coloring on separate palettes.
\end{coloring}

Therefore, from now on we fix a quadruple $(x, y, z, u)\in \{0,1\}^2\times \{\lft,\rgt\}\times \{0, 1, \dots, \ell\}$ and focus on coloring $G[W_{x,y,z,u}]$ using as many colors as specified in \eqref{eq:col-xyzu}. Denote $W\coloneqq W_{x,y,z,u}$ for brevity.

First, using we construct a coloring that at least deals with edges within subblobs.

\begin{coloring}\label{col:intrasubblob}
  For each $(x,y,z,u)$-subblob $I$, properly color the subgraph $G[I]$ using $f_d(2^{u+1})$ colors; this is possible due to $\clique{G[I]}\leq 2^{u+1}$. Take the union of these colorings using {\em{the same}} palette of $f_d(2^{u+1})$ colors. This is a coloring of $W$ with the property that every two adjacent vertices of $W$ belonging to the same subblob receive different colors. Call this coloring~$\lambda_1$.
\end{coloring}

Coloring $\lambda_1$ defined above already properly colors all the edges within subblobs. Our next goal is to refine $\lambda_1$ to a coloring that also properly color edges connecting vertices from different subblobs. These come in two different types: the subblobs may be either contained in the same blob, or be contained in different blobs. Consequently, the refinement is done in two steps corresponding to the two types.

Let us fix $i\in [m]$ and enumerate the bucket $\Sc_{i,x,y,z,u}$ as $\{I_{i,x,y}^{j(1)}, I_{i,x,y}^{j(2)}, \dots, I_{i,x,y}^{j(\alpha)}\}$, where $\alpha\coloneqq |\Sc_{i,x,y,z,u}|$ and $j(1) < j(2) < \dots < j(\alpha)$.
Recall that for each subblob, we have previously chosen an arbitrary vertex $v_{j(b)} \in I_{i,x,y}^{j(b)}$. We observe the following.

\begin{lemma}
  \label{lem:finding-mixed-matrices-outside-blobs}
  For every $c \in \{2, 3, \dots, \alpha\}$, we have the following.
  \begin{itemize}
    \item If $z = \lft$, then the submatrix $M[\{w\colon v_{j(c-1)}\leq w\leq v_{j(c)}\}, \bigcup_{i'<i} B_{i'} ]$ is mixed;
    \item If $z = \rgt$, then the submatrix $M[\{w\colon v_{j(c-1)}\leq w\leq v_{j(c)}\},\bigcup_{i'>i} B_{i'}]$ is mixed.
  \end{itemize}
\end{lemma}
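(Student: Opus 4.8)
The plan is to reconstruct why the buckets were defined the way they were. Recall the bucketing rule: a subblob $I_{i,x,y}^{j(c)}$ with $j(c)\geq 2$ receives label $z=\lft$ exactly when, choosing any witness vertex showing that $I_{i,x,y}^{j(c)}$ and $I_{i,x,y}^{j(c)-1}$ (its immediate predecessor subblob in $B'_{i,x,y}$) are not twins towards $V(G)\setminus B_i$, that witness lies in a blob $B_{i'}$ with $i'<i$. Fix $c\in\{2,\dots,\alpha\}$ and assume $z=\lft$; the case $z=\rgt$ will be fully symmetric, with ``below $B_i$'' replaced by ``above $B_i$''. Write $P\coloneqq\bigcup_{i'<i}B_{i'}$ and let $W^{(c)}\coloneqq\{w: v_{j(c-1)}\le w\le v_{j(c)}\}$; I want to show $M[W^{(c)},P]$ is mixed, i.e.\ contains a corner.

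The key observation is that between the two chosen representatives $v_{j(c-1)}$ and $v_{j(c)}$, the order $\leq$ (restricted to $B'_{i,x,y}$) passes through all of the subblobs $I_{i,x,y}^{j(c-1)},I_{i,x,y}^{j(c-1)+1},\dots,I_{i,x,y}^{j(c)}$ — here I use that within a fixed $(x,y)$ the subblobs are ordered by $\leq$ and that consecutive elements of the bucket $\Sc_{i,x,y,z,u}$ need not be consecutive subblobs of $B'_{i,x,y}$, so there may be intermediate subblobs in between, but that is harmless. For at least one consecutive pair of subblobs $I_{i,x,y}^{j-1},I_{i,x,y}^{j}$ strictly inside this range, the twin-breaking witness must come from a blob $B_{i'}$ with $i'<i$: indeed, the last subblob in the range is $I_{i,x,y}^{j(c)}$, which by definition of $z=\lft$ has its witness below $B_i$. (More carefully: the subblob-forming procedure makes $I_{i,x,y}^{j-1}$ a \emph{maximal} twin-prefix, so the first vertex of $I_{i,x,y}^{j}$ is already not a twin of the vertices of $I_{i,x,y}^{j-1}$ towards $V(G)\setminus B_i$; the bucket rule records whether the witness sits below or above $B_i$, and for $I_{i,x,y}^{j(c)}$ it sits below.) So there are two vertices $w_1<w_2$ in $W^{(c)}$ and a column vertex $a\in P$ with $M[w_1,a]\neq M[w_2,a]$; equivalently, the column of $M[W^{(c)},P]$ indexed by $a$ is non-constant, so $M[W^{(c)},P]$ is not horizontal.

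It remains to argue $M[W^{(c)},P]$ is not vertical either, which gives mixedness by definition. This is where I must use the decomposition into the four sets $B'_{i,x,y}$ and the semi-purity of interblob connections. Since $W^{(c)}\subseteq B'_{i,x,y}$, every $w\in W^{(c)}$ has $M[w,\fst]=x$ and $M[w,\lst]=y$; in particular, because $\fst=1$ is the very first vertex of $G$ and every blob $B_{i'}$ with $i'<i$ lies entirely \emph{after} $\fst$ — wait, more to the point, $\fst\in B_1\subseteq P$ (assuming $i\geq 2$, which holds since otherwise $z=\lft$ is impossible for $j(c)\geq 2$: a $\lft$-witness needs a blob below $B_i$). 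So the column of $M[W^{(c)},P]$ indexed by $\fst$ is the constant column $x$. Meanwhile we just produced a non-constant column (indexed by $a$). A matrix possessing both a constant column and a non-constant column is not vertical (a vertical matrix has all columns constant, so any two columns that are both constant with different values already contradict verticality — but here I need two \emph{different} constant columns, or simply: a non-constant column immediately shows the matrix is not vertical). Hence $M[W^{(c)},P]$ is neither horizontal nor vertical, so it is mixed. The $z=\rgt$ case uses $\lst=|V(G)|\in B_m$ which lies in $\bigcup_{i'>i}B_{i'}$ when $i\leq m-1$ (again forced, since a $\rgt$-witness requires a blob above $B_i$), giving the constant column $y$ there.

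The step I expect to be the main obstacle is the first non-trivial one: cleanly extracting, from the \emph{bucket}-level statement ``the predecessor of $I_{i,x,y}^{j(c)}$ \emph{in the bucket} is $I_{i,x,y}^{j(c-1)}$'' together with ``$I_{i,x,y}^{j(c)}$ is a $\lft$-subblob'', a genuine pair of rows $w_1,w_2\in W^{(c)}$ and a column $a$ below $B_i$ witnessing non-constantness — the subtlety being that the twin-breaking witness recorded for $I_{i,x,y}^{j(c)}$ is defined relative to its \emph{immediate predecessor subblob} $I_{i,x,y}^{j(c)-1}$ in $B'_{i,x,y}$, not relative to $I_{i,x,y}^{j(c-1)}$, and one must check that this predecessor subblob is still contained in the interval $W^{(c)}$, i.e.\ that $v_{j(c-1)}$ does not lie strictly after it. This follows because $j(c-1)\leq j(c)-1<j(c)$ and the subblobs of $B'_{i,x,y}$ are $\leq$-ordered by index, so $v_{j(c-1)}\in I_{i,x,y}^{j(c-1)}$ precedes all of $I_{i,x,y}^{j(c)-1}$, hence the witness vertex $v_{j(c)}$ and its partner both lie in $W^{(c)}$. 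The remaining non-verticality argument via $\fst$/$\lst$ is routine once one notices that the partition into $B'_{i,x,y}$ was engineered precisely to make these two special columns constant on $W^{(c)}$.
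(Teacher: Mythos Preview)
Your approach is the paper's approach: use the $z=\lft$ witness $a\in P$ that distinguishes $v_{j(c)}$ from $v_{j(c)-1}$, pair it with the column $\fst$ on which both agree, and deduce mixedness. Two slips, however, leave the argument as written incomplete.

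First, you have ``horizontal'' and ``vertical'' interchanged. A non-constant column certifies that the matrix is not \emph{vertical} (vertical means every column is constant), not that it is not horizontal; the horizontal matrix $\bigl(\begin{smallmatrix}0&0\\1&1\end{smallmatrix}\bigr)$ has two non-constant columns. Conversely, possessing both a constant column and a non-constant column is precisely what rules out horizontality: if every row were constant then any column determines all other columns, so a constant column would force every column to be constant. With the labels in your second and third paragraphs swapped, the logic becomes correct. As written, you have argued ``not vertical'' twice and ``not horizontal'' never.

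Second, the claim $W^{(c)}\subseteq B'_{i,x,y}$ is false. The set $W^{(c)}=\{w:\,v_{j(c-1)}\le w\le v_{j(c)}\}$ is the full interval in the order on $V(G)$, so while it is contained in the convex set $B_i$, it may contain poor vertices of $B_i$ as well as rich vertices lying in $B'_{i,x',y'}$ for $(x',y')\neq(x,y)$. Hence the $\fst$-column of $M[W^{(c)},P]$ need not be constantly $x$. The repair, which is exactly what the paper does, is not to argue about the whole column but to restrict to the two rows $v_{j(c)-1}$ and $v_{j(c)}$: both are representatives of subblobs of $B'_{i,x,y}$, so both take value $x$ in column $\fst$, while they differ in column $a$. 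That $2\times2$ submatrix is mixed, and any matrix containing a mixed submatrix is mixed. Your final paragraph already handles the one non-obvious containment, namely that $v_{j(c)-1}\in W^{(c)}$ because $j(c-1)\le j(c)-1$.
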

  \begin{proof}
    Assume that $z = \lft$.
    The proof for $z = \rgt$ is symmetric, so we omit it.
    
    Since $I_{i,x,y}^{j(c)} \in \Sc_{i,x,y,z, u}$, the vertices $v_{j(c)}$ and $v_{j(c) - 1}$ have different neighborhoods in the set $\bigcup_{i'<i} B_{i'}$. That is, there is a vertex $a\in \bigcup_{i'<i} B_{i'}$ such that $M[v_{j(c) - 1}, a] \neq M[v_{j(c)}, a]$. Note that the existence of $a$ implies that $i>1$, so in particular $\fst\notin B_i$.
    By the construction, we have  $M[v_{j(c) - 1}, \fst] = M[v_{j(c)}, \fst] = x$. It follows that $M[\{v_{j(c - 1)}, \dots ,v_{j(c)}\}, \bigcup_{i'<i} B_{i'} ]$ contains a mixed $2\times 2$ submatrix, so it is mixed as~well.
  \end{proof}

Let $G_{i}\coloneqq G[\bigcup \Sc_{i,x,y,z, u}]$, and let $M_{i}\coloneqq M[\bigcup \Sc_{i,x,y,z, u},\bigcup \Sc_{i,x,y,z, u}]$ be its adjacency matrix in the order inherited from $G$.
Let also $\Dc_{i}$ be the (symmetric) $\alpha$-division of $M_{i}$ according to the boundaries of subblobs in $\Sc_{i,x,y,z, u}$.

Naturally, for each $p < q$, if the zone $\Dc_{i}[p, q]$ is non-zero, then it is of at least one of the following types: mixed (type~$\Ms$), non-zero horizontal (type $\Hs$) or non-zero vertical (type $\Vs$). Our goal is to construct three colorings of the subblobs in $\Sc_{i}$ that respectively take care of these three types of connections; these will be called $\phi^\Ms$, $\phi^\Hs$, and $\phi^\Vs$, respectively. 
Hence, it is natural to define $G^\Hs_{i}$ to be the horizontal compression of $M_{i}$ along its division $\Dc_{i}$, and similarly let $G^\Vs_{i}$ and  $G^\Ms_{i}$ be the corresponding vertical and mixed compressions. So $\phi^\Ms$, $\phi^\Hs$, and $\phi^\Vs$ should be just proper colorings of $G^\Ms_{i}$, $G^\Hs_{i}$, and $G^\Vs_{i}$, respectively.

Obtaining $\phi^\Ms$ is easy.
Namely, by Lemma~\ref{lem:mixed-degenerate}, the graph $G^\Ms_{i}$ admits a proper coloring $\phi^\Ms$ with $C_d$ colors; here, $C_d$ is the constant provided by Lemma~\ref{lem:mixed-degenerate}.

We now show how to obtain colorings $\phi^\Hs$ and $\phi^\Vs$.  
Let $M^\Hs_{i}$ and $M^\Vs_{i}$ be the adjacency matrices of $G^\Hs_{i}$ and $G^\Vs_{i}$, respectively, in the natural order inherited from $G$.
The next lemma is the key conceptual step:
we observe that the complexity of the matrices $M^\Hs_{i}$ and $M^\Vs_{i}$ has dropped.

\begin{lemma}
  $M^\Hs_{i}$ is $(d - 1)$-almost mixed-free.
\end{lemma}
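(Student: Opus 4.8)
The plan is to argue by contradiction, transforming a hypothetical $(d-1)$-almost mixed minor of $M^\Hs_i$ into a $d$-almost mixed minor of $M$; since $M$ is $d$-almost mixed-free, this is a contradiction. So suppose $M^\Hs_i$ has a $(d-1)$-almost mixed minor. As $d\geq 3$, we have $d-1\geq 2$, so Lemma~\ref{lem:almost-minor-lifting} applies to the matrix $M_i$ together with its symmetric division $\Dc_i$ into the subblobs of $\Sc_{i,x,y,z,u}$ (whose horizontal compression is precisely $M^\Hs_i$): it yields a $(d-1)$-almost mixed minor $\Lc$ of $M_i$ that is a coarsening of $\Dc_i$ in which every zone spans at least two subblobs in each direction. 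Write the row blocks of $\Lc$ as $R_1,\dots,R_{d-1}$, where $R_q$ is the union of the subblobs $I_{i,x,y}^{j(c)}$ with $a_q\leq c\leq a_{q+1}-1$ for some indices $1=a_1<a_2<\dots<a_{d-1}<a_d=\alpha+1$ satisfying $a_{q+1}\geq a_q+2$, and describe the column blocks $C_1,\dots,C_{d-1}$ analogously by indices $1=b_1<\dots<b_d=\alpha+1$ with $b_{q+1}\geq b_q+2$.

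By symmetry assume $z=\lft$ (for $z=\rgt$ one replaces $\bigcup_{i'<i}B_{i'}$ by $\bigcup_{i'>i}B_{i'}$, which becomes the last block rather than the first, and uses the matching clause of Lemma~\ref{lem:finding-mixed-matrices-outside-blobs}). Set $L\coloneqq\bigcup_{i'<i}B_{i'}$; we may assume $\alpha\geq 2$, since otherwise $M^\Hs_i$ is too small to admit a $(d-1)$-almost mixed minor, and then $i\geq 2$ by the definition of the $\lft$-buckets, so $L\neq\emptyset$. I would take the extended minor $\Lc'$ to be the $d$-division of $M$ whose first block is $L$ and whose remaining $d-1$ blocks partition $V(G)\setminus L$ into intervals with cut-points $\min I_{i,x,y}^{j(a_2)},\dots,\min I_{i,x,y}^{j(a_{d-1})}$ for the rows and $\min I_{i,x,y}^{j(b_2)},\dots,\min I_{i,x,y}^{j(b_{d-1})}$ for the columns. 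These are genuine convex blocks because inside $B_i$ the subblobs appear in the order of their indices. The point of this choice is that each new block $q\in[d-1]$ is wide enough to contain, simultaneously, the subblob-union $R_q$ (respectively $C_q$) and the entire interval $\{w:v_{j(a_q)}\leq w\leq v_{j(a_q+1)}\}$ of $V(G)$ wedged between two consecutive bucket representatives; the latter containment is where the inequality $a_q+1\leq a_{q+1}-1$ is used.

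It then remains to verify that every off-diagonal zone of $\Lc'$ is mixed. For $p\neq q$ with $p,q\in[d-1]$, the zone $\Lc'[p,q]$ contains the zone $\Lc[p,q]$ as a submatrix with identical entries, and since $\Lc[p,q]$ is mixed it contains a $2\times 2$ mixed submatrix, which is then also a $2\times 2$ mixed submatrix of $\Lc'[p,q]$ --- here one uses that being mixed only requires the existence of \emph{some} $2\times 2$ mixed submatrix, not of a contiguous corner --- so $\Lc'[p,q]$ is mixed. For the two new zones in row/column $q\in[d-1]$: $\Lc'[q,0]$ contains $M[\{w:v_{j(a_q)}\leq w\leq v_{j(a_q+1)}\},\,L]$, which is mixed by Lemma~\ref{lem:finding-mixed-matrices-outside-blobs} applied with the index $a_q+1\in\{2,\dots,\alpha\}$ in the case $z=\lft$; and $\Lc'[0,q]$ is the transpose of the submatrix $M[(\text{$q$-th column block of }\Lc'),\,L]$, which contains $M[\{w:v_{j(b_q)}\leq w\leq v_{j(b_q+1)}\},\,L]$ and hence is mixed for the same reason, so $\Lc'[0,q]$ is mixed because the transpose of a mixed matrix is mixed. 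Thus $\Lc'$ is a $d$-almost mixed minor of $M$, contradicting the hypothesis on $M$.

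The step I expect to be the genuine obstacle is the construction of $\Lc'$ and the check that it is legitimate. The subblobs of $\Sc_{i,x,y,z,u}$ are convex only inside $B'_{i,x,y}$, not inside $V(G)$, so $\Lc$ does not restrict directly to a division of $M$ with one extra block; its blocks must be widened to honest intervals of $V(G)$ in such a way that both containments described above survive. The clean fact that makes this widening harmless --- and which deserves to be stated explicitly --- is that mixedness of a zone is preserved under enlarging the zone, precisely because mixedness is detected by an arbitrary (not necessarily contiguous) $2\times 2$ mixed submatrix.
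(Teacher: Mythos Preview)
Your proposal is correct and follows essentially the same approach as the paper: lift the hypothetical $(d-1)$-almost mixed minor of $M^\Hs_i$ to $M_i$ via Lemma~\ref{lem:almost-minor-lifting}, prepend the block $\bigcup_{i'<i}B_{i'}$ (for $z=\lft$), widen the remaining blocks to honest intervals of $V(G)$, and certify the new off-diagonal zones using Lemma~\ref{lem:finding-mixed-matrices-outside-blobs}. Your version is more explicit about the cut-points and about why enlarging a zone preserves mixedness, but the structure of the argument is identical.
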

  \begin{proof}
    We only prove the lemma for $z = \lft$. For $z = \rgt$ the proof is analogous, so we omit it.
    
    Aiming towards a contradiction, suppose that $M^\Hs_{i}$ contains a~$(d-1)$-almost mixed minor~$\Ec$.
    Since $d \geq 3$, Lemma~\ref{lem:almost-minor-lifting} applies, and there exists a~$(d-1)$-almost mixed minor $\Lc$ of $M_{i}$ which is a~coarsening of $\Dc_{i}$, and each (row or column) block of $\Lc$ spans at least two (row or column) blocks of $\Dc_{i}$.
    We now construct a~$d$-almost mixed minor $\Lc'$ of $M$ in the following way:
    \begin{itemize}
      \item the first row block of $\Lc'$ spans rows of $\bigcup_{i'<i} B_{i'}$ of $M$;
      \item the first column block of $\Lc'$ spans columns of $\bigcup_{i'<i} B_{i'}$ of $M$;
      \item the $i$th row block of $\Lc'$ ($i \in \{2, 3, \dots, d\}$) spans all rows in $M$ that are spanned by the $(i-1)$st row block of $\Lc$ in $M_{i}$; analogously for the $i$th column block.
    \end{itemize}
  There is a technical detail here: as defined above, formally $\Lc'$ is a division of a submatrix of $M$ induced by rows and columns of $\bigcup_{i'<i} B_{i'}\cup V(G_{i})$. This can be easily fixed by expanding row and column blocks of $\Lc'$ in any convex way so that they cover all of rows and columns of $M$. This way the zones can only get larger.

  It remains to show that $\Lc'$ is indeed a $d$-almost mixed minor of $M$.
  Naturally, each $\Lc'[p, q]$ for $p, q \geq 2$, $p \neq q$, is mixed due to the mixedness of $\Lc[p-1, q-1]$.
  Also, $\Lc'[p, 1]$ is mixed for $p \geq 2$ for the following reason: the $(p-1)$st row block of $\Lc$ contains the rows corresponding to the vertices $v_{j(c-1)}, v_{j(c)}$ for some $c \in \{2, 3, \dots, \alpha\}$, and then,
  by Lemma~\ref{lem:finding-mixed-matrices-outside-blobs}, the submatrix $M[\{w\colon v_{j(c-1)}\leq w\leq v_{j(c)}\},\bigcup_{i'<i} B_{i'}]$ is mixed.
  This submatrix is also a~submatrix of the zone $\Lc'[p, 1]$, so this zone is mixed as well.
  That $\Lc'[1, q]$ is mixed for every $q \geq 2$ follows from a~symmetric argument.
  Thus, $\Lc'$ is indeed a~$d$-almost mixed minor of $M$; a~contradiction.
  \end{proof}

A symmetric proof shows that $M^\Vs_{i}$ is also $(d-1)$-almost mixed free.
By Lemma~\ref{lem:clique_lemma}, we have $\clique{G^\Hs_{i}}\leq 2\omega^{d-1}$, so by we conclude that $G^\Hs_{i}$ admits a proper coloring $\phi^\Hs$ using $f_{d-1}(2\omega^{d-1})$ colors. By a symmetric reasoning,  $G^\Vs_{i}$ also admits a proper coloring $\phi^\Vs$ using $f_{d-1}(2\omega^{d-1})$ colors.

\begin{coloring}
  For every $i\in [m]$, 
  construct a coloring $\lambda^i_2$ of $V(G_{i})$ as follows:
  for every subblob $I\in \Sc_{i, x, y, z, u}$ and $v\in I$, we let
  $$\lambda^i_2(v)=(\lambda_1(v),\phi^\Ms(I),\phi^\Hs(I),\phi^\Vs(I)),$$
  where $\phi^\Ms$, $\phi^\Hs$, and $\phi^\Vs$ are constructed as above.
  Let $\lambda_2$ be the union of colorings $\lambda^i_2$ for $i\in [m]$ using the same palette of $C_d\cdot f_d(2^{u+1})\cdot f_{d-1}(2\omega^{d-1})^2$ colors. Thus, $\lambda_2$ is a coloring of $W$ using $C_d\cdot f_d(2^{u+1})\cdot f_{d-1}(2\omega^{d-1})^2$ colors that satisfies the following property: for every pair of adjacent vertices $w,w'\in W$ that belong to the same blob,  we have $\lambda_2(w)\neq \lambda_2(w')$. 
\end{coloring} 

Denote $\Lambda\coloneqq C_d\cdot f_d(2^{u+1})\cdot f_{d-1}(2\omega^{d-1})^2$.
Let $F^t$ for $t\in [\Lambda]$ be the color classes of $\lambda_2$. Clearly, each $F^t$  is a subset of $W$ such that $F^t\cap B_i$ is an independent set for each $i\in [m]$. We observe that sets $F^t$ induce subgraphs with relatively small clique numbers.

\begin{lemma}\label{lem:Ft}
 For each $t\in [\Lambda]$, we have 
  $\clique{G[F^t]} \leq 2\left\lfloor \frac{k}{2^u} \right\rfloor + 1$.
\end{lemma}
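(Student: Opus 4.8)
The plan is to bound the size of an arbitrary clique $K$ in $G[F^t]$; I claim $|K|\le 2\floor{k/2^u}+1$, which gives the lemma. The first thing to record is that $\lambda_2$, restricted to each $G_i=G[\bigcup\Sc_{i,x,y,z,u}]$, is a proper coloring of $G_i$: an edge of $G_i$ either lies inside one subblob of $\Sc_{i,x,y,z,u}$ and is then separated by $\lambda_1$ (Coloring Step~\ref{col:intrasubblob}), or joins two distinct subblobs $I,I'$, in which case $\Dc_i[I,I']$ is non-empty, hence mixed, non-zero horizontal, or non-zero vertical, and is correspondingly separated by $\phi^\Ms$, $\phi^\Hs$, or $\phi^\Vs$. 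Since $F^t\cap B_i=F^t\cap V(G_i)$ is then independent in $G$ for every $i$, the clique $K$ uses at most one vertex per blob. Thus I may write $K=\{w_1,\dots,w_p\}$ with $w_a$ lying in some $(x,y,z,u)$-subblob $I_a\subseteq B_{i_a}$, the blobs $B_{i_1},\dots,B_{i_p}$ being pairwise distinct; the bucketing rule gives $\clique{G[I_a]}\ge 2^u$ for every $a$.

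Next I would prove the structural heart: the subblobs $I_1,\dots,I_p$ are pairwise complete, and for $a<b$ either $I_a$ is complete to all of $B_{i_b}$, or $I_b$ is complete to all of $B_{i_a}$. Pairwise completeness follows by a twins argument: vertices of $I_a$ are twins with respect to $V(G)\setminus B_{i_a}\supseteq B_{i_b}$, so $w_aw_b\in E(G)$ forces every vertex of $I_a$ to be adjacent to $w_b$; then, vertices of $I_b$ being twins with respect to $V(G)\setminus B_{i_b}\supseteq I_a$ and $w_b$ being adjacent to all of $I_a$, every vertex of $I_b$ is adjacent to all of $I_a$. For the dichotomy, recall that $\Dc[i_a,i_b]$ carries no mixed zone (no mixed connections between distinct blobs) and is non-empty, hence non-constant horizontal or non-constant vertical; in the former case $B_{i_a}$ is semi-pure towards $B_{i_b}$, and since $I_a$ is complete to $I_b\subseteq B_{i_b}$ (so no vertex of $I_a$ is anti-complete towards $B_{i_b}$) we get $I_a$ complete to all of $B_{i_b}$; the latter case is symmetric.

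Finally comes the combinatorial step, which I expect to be the main obstacle: squeezing the dichotomy into the exact bound. I would form a tournament $T$ on $[p]$ by orienting each pair so that every arc $x\to y$ witnesses ``$I_x$ is complete to $B_{i_y}$'' — legitimate by the previous paragraph. Some vertex $c$ of $T$ has in-degree at least $\ceil{(p-1)/2}$, since the average in-degree equals $(p-1)/2$. Letting $N$ be the in-neighborhood of $c$, every $I_v$ with $v\in N$ is complete to $B_{i_c}$, while $\{I_v:v\in N\}$ are pairwise complete and are disjoint from one another and from $B_{i_c}$. Hence $G[\bigcup_{v\in N}I_v\cup B_{i_c}]$ is a join of the graphs $G[I_v]$ ($v\in N$) and $G[B_{i_c}]$, so its clique number equals $\sum_{v\in N}\clique{G[I_v]}+\clique{G[B_{i_c}]}\ge |N|\cdot 2^u+(\omega-k)$, and since this clique number is at most $\omega$ we obtain $|N|\le\floor{k/2^u}$. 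Therefore $\ceil{(p-1)/2}\le\floor{k/2^u}$, i.e. $p\le 2\floor{k/2^u}+1$, as desired. A detail to be careful about: here $B_{i_c}$ should be taken as the full blob (including poor vertices) inside the graph present before $Z$ was removed, so that $\clique{G[B_{i_c}]}=\omega-k$ holds; the clique $K$ and all the subblobs $I_a$ live inside that graph too, so nothing is lost by passing to it.
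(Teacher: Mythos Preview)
Your proof is correct and follows essentially the same route as the paper's: both note that $F^t$ meets each blob in an independent set (so a clique hits distinct blobs), orient the pairs of clique vertices according to the direction of semi-purity between their blobs, pick a vertex of large in-degree, and assemble a clique of size exceeding $\omega$ from a maximum clique in that vertex's full blob together with maximum cliques in the subblobs of its in-neighbors. The differences are cosmetic---you argue directly rather than by contradiction and front-load the twin argument into the dichotomy ``$I_a$ complete to $B_{i_b}$ or $I_b$ complete to $B_{i_a}$'', whereas the paper records only blob-level semi-purity in its digraph and defers the twin reasoning to the final clique verification.
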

  \begin{proof}
    Denote $\beta \coloneqq \left\lfloor \frac{k}{2^u} \right\rfloor$ for brevity.
    Suppose that $G[F^t]$ contains a~clique $K$ of size $2\beta + 2$.
    As $F^t$ intersects every blob on an independent set, every vertex of $K$ comes from a~different blob. For $v\in K$, let $B(v)$ be the blob containing $v$.
    
    Construct a digraph $T$ on vertex set $K$ as follows: for distinct $v,v'\in K$, add an arc $(v,v')$ to $T$ if $B(v)$ is semi-pure towards $B(v')$. Since the division $\Dc$ induced by blobs is assumed to have no mixed zones (as a result of Coloring Step~\ref{step:mixed}), $T$ is semi-complete, that is, for all distinct $v,v'\in K$ at least one of the arcs $(v,v')$ and $(v',v)$ is present. It follows that there exists $w\in K$ whose indegree in $T$ is at least $\beta+1$. In other words, there are vertices $v_1,\ldots,v_{\beta+1}\in K$, different from $w$, such that $B(v_i)$ is semi-pure towards $B(w)$ for each $i\in [\beta+1]$. In particular, this implies that each    
    $v_i$ is pure towards $B(w)$.
    
    Let $R$ be a maximum clique in the blob $B(w)$; recall that $|R|=\omega-k$. Further, for each $i\in [\beta+1]$, let $S_i$ be a maximum clique in the subblob of $B(v_i)$ containing $v_i$; recall that $2^u \leq |S_i| < 2^{u+1}$.
    Finally, let $$L\coloneqq R\cup S_1\cup \dots \cup S_{\beta+1}.$$
    Observe that 
    \[ |L| \geq 2^u \cdot \left(\left\lfloor \frac{k}{2^u} \right\rfloor + 1\right) + (\omega - k) > \omega. \]
    
    So to reach a contradiction, it remains to show that $S$ is a clique in $G$.
    To this end, pick two different vertices $a, b \in S$.
    If $a$ and $b$ come from the same blob, then, by the construction, they are adjacent.
    Otherwise, we consider two cases:
    \begin{itemize}
      \item $a \in S_p$ and $b \in S_q$ for two different $p, q \in [\beta + 1]$.
        Since $a$ and $v_p$ are in the same $(x, y, z, u)$-subblob, we get that $a$ and $v_p$ are twins with respect to the blob $B(v_q)$. In particular, $a$ and $v_p$ are both adjacent or both non-adjacent to $b$, or equivalently $M[a, b] = M[v_p, b]$.
        Similarly, $b$ and $v_q$ are twins with respect to the blob $B(v_p)$, implying $M[v_p, b] = M[v_p, v_q]$. But $v_p$ and $v_q$ are adjacent due to belonging to the clique $K$, so $M[a,b]=M[v_p,v_q]=1$. Hence $a$ and $b$ are adjacent as well.
      
      \item $a \in S_p$ and $b \in R$ for some $p \in [\beta + 1]$.
        As above, we have $M[a, b] = M[v_p, b]$.
        But since the blob $B(v_p)$ is semi-pure towards $B(w)$, and $b\in B(w)$, we have $M[v_p, b] = M[v_p, w]$. But $v_p$ and $w$ are adjacent due to belonging to the clique $K$, so $M[a,b]=M[v_p,w] = 1$. Hence again, $a$ and $b$ are adjacent.\qedhere
    \end{itemize}
  \end{proof}

We can now finalize the construction as follows.
  
\begin{coloring}
  For each $t\in [\Lambda]$, properly color $G[F^t]$ using a separate palette of
  \[
    f_d\left(2\left\lfloor \frac{k}{2^u}\right\rfloor + 1\right) \leq
    f_d\left(\frac{2k}{2^u} + 1\right)\ \ \text{colors}.
  \]
  This is possible by Lemma~\ref{lem:Ft}.
\end{coloring}

Thus, we have obtained a proper coloring of $G[W]=G[W_{x,y,z,u}]$ using $\Lambda\cdot f_d\left(\frac{2k}{2^u} + 1\right)$ colors, which fulfills the task set out in Coloring Step~\ref{col:focus-xyzu}. So this concludes the proof of~Lemma~\ref{lem:main}.
%
%
%
%

\newcommand{\RHS}{\mathsf{RHS}}
\newcommand{\eps}{\varepsilon}

\section{Solving the recurrence}

Our goal in this section is to prove the following statement, which will be later combined with Lemma~\ref{lem:main}.

\begin{lemma}\label{lem:recursion}
 Suppose $f_2,f_3,\ldots$ are functions from $\Z_{>0}$ to $\Z_{>0}$ satisfying the following:
 \begin{itemize}[nosep]
  \item $f_2(n)\leq n$ for all integers $n\geq 1$;
  \item $f_d(1)=1$ for all integers $d\geq 2$; and 
  \item for every integer $d\geq 3$ there exists $\alpha_d\in \N$ such that for every integer $n\geq 8$,
  \begin{equation}\label{eq:bobr}
  f_d(n)\leq \alpha_d\left[f_d(\lceil 7n/8\rceil)+f_{d-1}(2n^{d-1})^2\cdot \sum_{u=0}^{\lfloor \log n\rfloor-3} f_d(2^{u+1})\cdot f_d\left(\left\lceil\frac{n}{2^{u+1}}\right\rceil+1\right)\right].
  \end{equation}
 \end{itemize}
 Then for every integer $d\geq 2$ there exists a constant $\beta_d\in \N$ such that
 \begin{equation}\label{eq:wydra}
f_d(n)\leq 2^{\beta_d \log^{d-1} n} \qquad\textrm{for all }n\in \Z_{>0}.
 \end{equation}
\end{lemma}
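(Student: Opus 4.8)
The plan is to prove \eqref{eq:wydra} by induction on $d$. The base case $d=2$ is immediate: by hypothesis $f_2(n)\le n = 2^{\log n}$, so $\beta_2=1$ works. For the inductive step, fix $d\ge 3$ and assume a bound $f_{d-1}(m)\le 2^{\beta_{d-1}\log^{d-2}m}$ is already available. We then want to find $\beta_d$ such that $f_d(n)\le 2^{\beta_d\log^{d-1}n}$ for all $n$. Since $f_d$ is bounded on any finite range (and the claimed bound is at least $1$ for small $n$), it suffices to establish the inequality for $n$ larger than some constant and then inflate $\beta_d$ to absorb the finitely many small values; in particular we may always assume $n\ge 8$ so that \eqref{eq:bobr} applies. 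I would do the main estimate by a secondary induction on $n$: assume $f_d(m)\le 2^{\beta_d\log^{d-1}m}$ for all $m<n$ and bound the right-hand side of \eqref{eq:bobr}.

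The key step is to control the sum $\sum_{u=0}^{\lfloor\log n\rfloor-3} f_d(2^{u+1})\cdot f_d(\lceil n/2^{u+1}\rceil+1)$. Writing $g(m):=2^{\beta_d\log^{d-1}m}$, the inductive hypothesis on $f_d$ bounds each summand by roughly $g(2^{u+1})\cdot g(n/2^{u+1}+1)\le 2^{\beta_d[(u+1)^{d-1}+(\log(n/2^{u+1})+1)^{d-1}]}$ (up to the harmless $+1$'s, which change $\log$ by at most a constant factor and can be handled by slightly enlarging constants or restricting to large $n$). Setting $a=u+1\in[1,\log n-2]$ and $b=\log n - a$, the exponent is $\beta_d(a^{d-1}+(b+1)^{d-1})$; I would argue that for $d-1\ge 2$ the function $a\mapsto a^{d-1}+(b+1)^{d-1}$ on the simplex $a+b=\log n$ is maximized at the endpoints, so each term is at most about $2^{\beta_d(\log n)^{d-1}\cdot(1-c/\log n)}$ for some constant $c>0$ — i.e.\ strictly smaller than $g(n)$ by a multiplicative factor that is polynomial in $n$, say $g(n)\cdot n^{-\delta}$ for a constant $\delta=\delta(d)>0$. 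Summing over the $O(\log n)$ values of $u$ costs only a $\log n$ factor, and the prefactor $f_{d-1}(2n^{d-1})^2\le 2^{2\beta_{d-1}(\log(2n^{d-1}))^{d-2}}\le 2^{O_d(\log^{d-2}n)}$ is quasi-polynomial of strictly smaller degree, hence also $\le g(n)\cdot n^{-\delta/2}$ for large $n$. Thus the whole second term in \eqref{eq:bobr} is at most $g(n)\cdot n^{-\delta/4}$ for $n$ large; the first term $f_d(\lceil 7n/8\rceil)\le g(7n/8)=g(n)\cdot 2^{\beta_d[(\log n - \log(8/7))^{d-1}-(\log n)^{d-1}]}$ is likewise $\le g(n)\cdot n^{-\delta'}$ for a constant $\delta'>0$ (since the exponent drop is $\Theta(\log^{d-2}n)$). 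Adding these and multiplying by $\alpha_d$, we get $f_d(n)\le \alpha_d\,g(n)\,(n^{-\delta'}+n^{-\delta/4})$, which is at most $g(n)$ once $n$ exceeds a threshold depending only on $\alpha_d,\beta_d,d$. This closes the secondary induction, and then fixing $\beta_d$ large enough also to cover the bounded initial segment completes the inductive step.

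The main obstacle I anticipate is purely bookkeeping: making the convexity-type claim "$a^{d-1}+(b+1)^{d-1}$ with $a+b$ fixed is maximized at an endpoint" precise and robust under the off-by-one perturbations ($\lceil\cdot\rceil$, the $+1$ in $f_d(\lceil n/2^{u+1}\rceil+1)$, the $2n^{d-1}$ inside $f_{d-1}$), and ensuring that the gained factor $n^{-\delta}$ genuinely dominates both the $\log n$ summation loss and the lower-degree quasi-polynomial prefactor $f_{d-1}(2n^{d-1})^2$. Concretely, one has to check that $\delta(d)$ can be chosen independent of $n$ — this is where the "one-less log-power" in the inductive hypothesis for $f_{d-1}$ is essential, and where one must be careful that $\beta_d$ is chosen \emph{after} $\beta_{d-1}$ and $\alpha_d$ so that the threshold on $n$ and the value $\beta_d$ are mutually consistent (no circularity, since enlarging $\beta_d$ only helps on the finite bad range and does not affect the asymptotic gain). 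Everything else is routine manipulation of the inequality \eqref{eq:bobr}.
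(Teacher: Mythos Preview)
Your proposal is correct and follows essentially the same approach as the paper: outer induction on $d$, inner induction on $n$, with the key step being the convexity of $x\mapsto x^{d-1}$ to bound each summand uniformly (the paper packages this as $g_d(a)\cdot g_d(b)\le 2^{\beta_d}g_d(ab/2)$, which is exactly your ``maximized at an endpoint'' observation), followed by a reverse-Bernoulli estimate to extract the $\Theta(\log^{d-2}n)$ gain in the exponent that absorbs the $f_{d-1}$ prefactor, the $\log n$ summation loss, and the constant~$\alpha_d$. The bookkeeping concerns you flag are real but routine, and the paper resolves them just as you suggest---by fixing $\beta_d$ large after $\beta_{d-1}$ and $\alpha_d$, and by taking a large explicit base range for $n$.
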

\begin{proof}
 We proceed by induction on $d$. Clearly, for $d=2$ we may choose $\beta_1=1$.
 
 In the induction step for $d\geq 3$, we choose $\beta_d$ to be large enough so that the following inequalities are satisfied:
 \begin{gather}
  2^{\beta_d} \geq f_d(i)\quad \textrm{for all }i\in \{1,\ldots,2^{16}-1\},\label{eq:bs}\\
  \beta_d\cdot \frac{d-1}{2^{d+1}}\geq 3\beta_{d-1}d^{d-2},\label{eq:bd-}\\
  2^{\beta_d}\geq 2\alpha_d.\label{eq:ad}
 \end{gather}
 We now verify \eqref{eq:wydra} by induction on $n$. The base cases $n\in \{1,2,\ldots,2^{16}-1\}$ hold trivially thanks to~\eqref{eq:bs}, so from now on let us assume that $n\geq 2^{16}$.
 
 Let
 $$g_d(x)\coloneqq 2^{\beta_d \log^{d-1} x}$$
 and denote the right-hand side of~\eqref{eq:bobr} by $\RHS$. Using both induction assumptions, we have
 \begin{equation}\label{eq:nutria}
  \RHS\leq \alpha_d\left[g_d(\lceil 7n/8\rceil)+g_{d-1}(2n^{d-1})^2\cdot \sum_{u=0}^{\lfloor \log n\rfloor-3} g_d(2^{u+1})\cdot g_d\left(\left\lceil\frac{n}{2^{u+1}}\right\rceil+1\right)\right].
  \end{equation}
 By the convexity of function $x\mapsto x^{d-1}$, we have
 $$g_d(a)\cdot g_d(b)\leq g_d(2)\cdot g_d(ab/2)=2^{\beta_d}\cdot g_d(ab/2)\qquad \textrm{for all }a,b\geq 2.$$
 Therefore, for $u\in \{0,1,\ldots,\lfloor \log n\rfloor-3\}$,
 $$g_d(2^{u+1})\cdot g_d\left(\left\lceil\frac{n}{2^{u+1}}\right\rceil+1\right)\leq 2^{\beta_d}\cdot g_d\left (2^{u}\left\lceil\frac{n}{2^{u+1}}\right\rceil+2^{u}\right)\leq 2^{\beta_d}\cdot g_d\left(n/2+2^{u+1}\right)\leq 2^{\beta_d}\cdot g_d(3n/4).$$
 By combining this with~\eqref{eq:nutria} and observing that for $n\geq 2^{16}$ we have $\lceil 7n/8\rceil\leq 15n/16$, we conclude that
 \begin{equation}\label{eq:pizmak}
\RHS \leq \alpha_d\left[g_d(15n/16)+g_{d-1}(2n^{d-1})^2\cdot n\cdot 2^{\beta_d}\cdot g_d(3n/4)\right].
 \end{equation}
 
 In the estimation of the right hand side of~\eqref{eq:pizmak}
 we will need the following simple claim.
 
 \begin{claim}\label{cl:rev-bernoulli}
  For all reals $\eps\in [0,1/2]$ we have
  $$(1-\eps)^{d-1}\leq 1-\frac{d-1}{2^{d-2}}\cdot \eps.$$
 \end{claim}
 \begin{proof}
 Let $h(t)=(1+t)^{d-1}-\frac{d-1}{2^{d-2}}\cdot t-1$. Observe that for $t\in [-1/2,0]$, we have 
 $$h'(t)=(d-1)(1+t)^{d-2}-\frac{d-1}{2^{d-2}}\geq \frac{d-1}{2^{d-2}}-\frac{d-1}{2^{d-2}}=0.$$
 Since $h(0)=0$, it follows that $h(t)\leq 0$ for $t\in [-1/2,0]$; this is equivalent to the claim.
 \cqed\end{proof}
 
 First, let
 $$\nu\coloneqq g_d(15n/16).$$
 Observe that, by Claim~\ref{cl:rev-bernoulli},
 \begin{eqnarray}
  \log \nu & = & \beta_d (\log n - \log 16/15)^{d-1} = \beta_d\log^{d-1} n \left(1-\frac{\log 16/15}{\log n}\right)^{d-1}\label{eq:mors} \\
  & \leq & \beta_d\log^{d-1} n\left(1-\frac{(d-1)\log 16/15}{2^{d-2}}\cdot \frac{1}{\log n}\right) \leq \beta_d\log^{d-1} n - \frac{(d-1)\beta_d}{2^{d+2}}\log^{d-2} n.\nonumber
 \end{eqnarray}
 
 Next, let
 $$\mu\coloneqq g_{d-1}(2n^{d-1})^2\cdot n\cdot 2^{\beta_d}\cdot g_d(3n/4).$$
 Observe that
 \begin{eqnarray*}
  \log \mu & = & 2\beta_{d-1} \log^{d-2}(2n^{d-1})+\log n + \beta_d + \beta_d \log^{d-1} (3n/4)\\
  & \leq & 2\beta_{d-1}d^{d-2} \log^{d-2} n+\log n+\beta_d + \beta_d (\log n - \log 4/3)^{d-1}\\
  & \leq & 3\beta_{d-1}d^{d-2} \log^{d-2} n + \beta_d + \beta_d (\log n - \log 4/3)^{d-1}.
 \end{eqnarray*}
 By Claim~\ref{cl:rev-bernoulli}, we have
 \begin{eqnarray*}
 (\log n - \log 4/3)^{d-1} & = &\log^{d-1} n\cdot \left(1-\frac{\log 4/3}{\log n}\right)^{d-1} \\
 &\leq & \log^{d-1} n\cdot \left(1-\frac{(d-1)\log 4/3}{2^{d-2}}\cdot \frac{1}{\log n}\right)\leq \log^{d-1} n - \frac{d-1}{2^{d}}\cdot \log^{d-2} n.
 \end{eqnarray*}
 Therefore,
 \begin{eqnarray*}
  \log \mu & \leq & \beta_d \log^{d-1} n +\beta_d + \log^{d-2} n \cdot \left(3\beta_{d-1}d^{d-2} - \beta_d\cdot \frac{d-1}{2^{d}}\right)\\
  &\leq& \beta_d \log^{d-1} n + \beta_d -\frac{(d-1)\beta_d}{2^{d+1}}\log^{d-2} n, 
 \end{eqnarray*}
 where in the last inequality we used~\eqref{eq:bd-}. Further, since $n\geq 2^{16}$ and $d\geq 3$, we have
 $$\frac{d-1}{2^{d+1}}\log^{d-2} n\geq \frac{(d-1)\cdot 16^{d-2}}{2^{d+1}}\geq \frac{2\cdot 2^{4d-8}}{2^{d+1}}=2^{3d-8}\geq 2,$$
 hence
 \begin{equation}\label{eq:foka}
 \log \mu \leq \beta_d\log^{d-1} n -\frac{(d-1)\beta_d}{2^{d+2}}\log^{d-2} n.
 \end{equation}
 
 We now combine~\eqref{eq:mors} and~\eqref{eq:foka} with~\eqref{eq:pizmak}, thus obtaining:
 \begin{eqnarray*}
  \RHS & \leq & \alpha_d\left[2^{\log \nu}+2^{\log \mu}\right]\\  & \leq & \alpha_d\cdot 2^{\beta_d \log^{d-1} n}\cdot \left[2^{-\frac{(d-1)\beta_d}{2^{d+2}}\log^{d-2} n}+2^{-\frac{(d-1)\beta_d}{2^{d+2}}\log^{d-2} n}\right]\\
  & = & 2^{\beta_d\log^{d-1} n}\cdot \frac{2\alpha_d}{2^{\frac{(d-1)\beta_d}{2^{d+2}}\log^{d-2} n}}\leq 2^{\beta_d\log^{d-1} n}\cdot \frac{2\alpha_d}{2^{\frac{(d-1)\beta_d}{2^{d+2}}16^{d-2}}}\\
  & \leq & 2^{\beta_d\log^{d-1} n}\cdot \frac{2\alpha_d}{2^{\beta_d\cdot 2^{3d-9}}} \leq 2^{\beta_d\log^{d-1} n}\cdot \frac{2\alpha_d}{2^{\beta_d}} \leq 2^{\beta_d\log^{d-1} n}=g_d(n),
 \end{eqnarray*}
 where the last inequality follows from~\eqref{eq:ad}. As $f_d(n)\leq \RHS$, this concludes the proof.
\end{proof}

\section{Wrapping up the proof}

In this section we combine Lemmas~\ref{lem:main} and~\ref{lem:recursion} to obtain a proof of Theorem~\ref{thm:main}. However, since the statement of Lemma~\ref{lem:main} assumes $d\geq 3$, we need to consider the base case $d=2$ separately. This is provided by the following lemma. Recall here that a {\em{cograph}} is a $P_4$-free graph, that is, a graph that does not contain the path on $4$ vertices as an induced subgraph.

\begin{lemma}\label{lem:2cograph}
 Let $G$ be a graph that admits, under some vertex ordering, a $2$-almost mixed-free adjacency matrix. Then $G$ is a cograph.
\end{lemma}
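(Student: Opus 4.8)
The plan is to prove the contrapositive: if $G$ is not a cograph, then it contains an induced $P_4$, and I will show that any induced $P_4$ gives rise to a $2$-almost mixed minor in \emph{every} vertex ordering of $G$. Since a $2$-almost mixed minor is just a $2$-division $\Dc=(\{R_1,R_2\},\{C_1,C_2\})$ in which the two off-diagonal zones $\Dc[1,2]$ and $\Dc[2,1]$ are mixed, and $M$ is symmetric, it suffices to exhibit a single cut of the vertex order into a prefix $P$ and suffix $S$ such that $M[P,S]$ is mixed, i.e.\ $M[P,S]$ contains a corner (a $2\times2$ mixed contiguous submatrix). So the real task is: given the four vertices $a<b<c<d$ in the fixed order underlying the $4$-vertex induced subgraph that forms a $P_4$ (in \emph{some} ordering of its vertices, not necessarily $a,b,c,d$), find a split point and two rows and two columns witnessing a corner.

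The key step is a short case analysis over which of the three $P_4$'s on $\{a,b,c,d\}$ (as ordered sets, there are three labelled paths up to reversal: the path $a\!-\!b\!-\!c\!-\!d$, the path $a\!-\!c\!-\!b\!-\!d$, i.e.\ $b$ in the ``middle'' positions swapped, and $a\!-\!b\!-\!d\!-\!c$ type, etc.) is realized. Concretely, for each of the possible induced subgraphs on $\{a,b,c,d\}$ isomorphic to $P_4$, I would place the split between $b$ and $c$, take rows $\{a,b\}$ and columns $\{c,d\}$ (these are contiguous in $M$ restricted to $\{a,b,c,d\}$, but note we need contiguity in the full matrix $M$ — this is fine because we may first pass to the induced submatrix $M[\{a,b,c,d\},\{a,b,c,d\}]$, which by the closure of $d$-almost-mixed-freeness under submatrices is also $2$-almost mixed-free, so it suffices to find a corner there, and in a $4\times4$ matrix the submatrix on rows $\{a,b\}$, columns $\{c,d\}$ is contiguous). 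Then I inspect the $2\times2$ block $\begin{psmallmatrix} M[a,c] & M[a,d]\\ M[b,c] & M[b,d]\end{psmallmatrix}$ and check it is mixed; when it is not, I try instead rows $\{a,b\}$ vs.\ columns $\{b,c\}$ or shift the split, exhausting the handful of configurations. The crucial observation making this work is that for a $P_4$ with vertices in positions $1,2,3,4$, the two ``endpoints'' of the path are never the same as the two order-extremes in a way that would make all cuts pure: an induced $P_4$ has exactly two non-edges among its $\binom42=6$ pairs and they are "spread out", so in any linear arrangement some $2\times2$ window straddling a cut mixes a $0$ and a $1$ in both a non-constant-row and non-constant-column fashion.

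A cleaner way to organize the argument, which I would actually use to avoid a messy enumeration: recall that a graph is a cograph iff it can be built from single vertices by disjoint union and join, equivalently iff for every set of four vertices the induced subgraph is not $P_4$. I would instead argue directly that if $M$ has \emph{no} corner straddling \emph{any} prefix/suffix cut after restricting to any $4$ rows and the same $4$ columns, then the corresponding $4$-vertex induced subgraph is $P_4$-free, hence $G$ is a cograph. For a fixed quadruple $a<b<c<d$, ``no corner in $M[\{a,b,c,d\},\{a,b,c,d\}]$ straddling the $b\mid c$ cut'' says the $2\times2$ matrix on rows $\{a,b\}$, columns $\{c,d\}$ is horizontal or vertical; similarly at the $a\mid b$ and $c\mid d$ cuts. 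Translating ``horizontal'' and ``vertical'' into adjacency statements (horizontal on rows $\{a,b\}$ means $a$ and $b$ have the same adjacency to both of $c,d$; vertical means $\{c,d\}$ is adjacency-uniform from $a$'s and $b$'s perspective in the relevant sense) and combining the three constraints pins down the adjacency pattern on $\{a,b,c,d\}$ enough to rule out $P_4$.

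The main obstacle is purely bookkeeping: making sure that the $2\times2$ windows I pick are genuinely \emph{contiguous} submatrices of $M$ (not of the restricted $4\times4$ matrix) — but this is handled by the stated fact that any submatrix of a $d$-almost mixed-free matrix is $d$-almost mixed-free, so I may legitimately work inside $M[\{a,b,c,d\},\{a,b,c,d\}]$, where all the needed windows are contiguous. Beyond that, the only care needed is to handle the three possible positions of the $P_4$'s ``middle edge'' relative to the order $a<b<c<d$ and to double-check the degenerate subcases where two of the off-diagonal entries coincide; none of this is deep, but it must be done exhaustively. I expect the final proof to be a half-page case check, with the conceptual content being exactly the correspondence ``$P_4\ \Leftrightarrow$\ unavoidable corner across some cut''.
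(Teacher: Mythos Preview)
Your approach is correct and matches the paper's: argue by contraposition, pass to the $4\times 4$ adjacency matrix of an induced $P_4$ using closure under submatrices, and exhibit a $2$-almost mixed minor via the split $\{a,b\}\mid\{c,d\}$. The paper's execution is a bit cleaner than your plan: it notes that for \emph{every} bipartition of the vertex set of a $P_4$ into two pairs, neither side is semi-pure towards the other, so the middle split always makes the off-diagonal $2\times 2$ block mixed --- hence your contingency moves (shifting the cut, trying windows like rows $\{a,b\}$ versus columns $\{b,c\}$) are never needed, and in fact cannot help, since any $2$-division of a $4\times 4$ matrix with a singleton block has a $1\times k$ or $k\times 1$ off-diagonal zone, which is automatically horizontal or vertical.
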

\begin{proof}
 By contraposition and the fact that $2$-almost mixed-freeness is preserved under taking submatrices, it suffices to show that no vertex ordering of a $P_4$ yields a $2$-almost mixed-free adjacency matrix. Observe that if one partitions the vertex set of a $P_4$ into two parts of size $2$, then regardless of the choice of the partition, no part will be semi-pure towards the other. Therefore, for every vertex ordering of a $P_4$, dividing the corresponding adjacency matrix into four $2\times 2$ matrices yields a $2$-almost mixed minor.
\end{proof}

It is well known that cographs are {\em{perfect}}, that is, $\chi(G)=\omega(G)$ whenever $G$ is a cograph. Hence, from Lemma~\ref{lem:2cograph} we conclude that
$$f_2(\omega)\leq \omega\qquad\textrm{for all }\omega\in \Z_{>0}.$$
Further, we clearly have $f_d(1)=1$ for every $d\geq 2$, since graphs of clique number $1$ are edgeless. Finally, applying Lemma~\ref{lem:main} for $k=\lfloor\omega/8\rfloor$ yields that for all $\omega\geq 8$, $f_d(\omega)$ is upper bounded by 
\begin{eqnarray*}
 & & f_d(\lceil7\omega/8\rceil) + C_d \left[ f_d(\lceil7\omega/8\rceil) + 8C_df_{d-1}(2\omega^{d-1})^2 \cdot \sum_{u=0}^{\left\lfloor \log_2 \omega/8\right\rfloor} f_d(2^{u + 1}) \cdot f_d\left(\left\lceil\frac{ \omega}{2^{u+2}}\right\rceil + 1\right) \right]\\
 & \leq & 8C_d(C_d+1)\cdot \left[f_d(\lceil 7\omega/8\rceil)+f_{d-1}(2\omega^{d-1})^2\cdot \sum_{u=0}^{\lfloor \log \omega\rfloor-3} f_d(2^{u+1})\cdot f_d\left(\left\lceil\frac{\omega}{2^{u+1}}\right\rceil+1\right)\right].
\end{eqnarray*}
We may now apply Lemma~\ref{lem:recursion} to functions $f_2,f_3,\ldots$ to conclude the following.

\begin{theorem}\label{thm:main-mm}
 For every integer $d\geq 2$ there is a constant $\beta_d\in \N$ such that \mbox{$f_d(\omega)\leq 2^{\beta_d\cdot \log^{d-1} \omega}$} for every $\omega\in \Z_{>0}$. In other words, for every graph $G$ that has clique number $\omega$ and admits a $d$-almost mixed-free adjacency matrix under some vertex ordering, we have $\chi(G)\leq 2^{\beta_d\cdot \log^{d-1} \omega}$.
\end{theorem}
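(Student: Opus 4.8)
The plan is to obtain Theorem~\ref{thm:main-mm} by feeding the recurrence of Lemma~\ref{lem:main} into the abstract solver of Lemma~\ref{lem:recursion}, after first settling the base level $d=2$ by hand. In other words, I would check that the family $f_2,f_3,\ldots$ satisfies all three hypotheses of Lemma~\ref{lem:recursion}, apply it, and then unwind the definition of $f_d$ to recover the graph-theoretic phrasing.

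First I would dispose of the base case. By Lemma~\ref{lem:2cograph}, any graph admitting a $2$-almost mixed-free adjacency matrix under some vertex ordering is a cograph, and cographs are perfect, so $\chi(G)=\omega(G)$ for such $G$; consequently $f_2(\omega)\le \omega$ for all $\omega\in\Z_{>0}$. Moreover, for every $d\ge 2$ we trivially have $f_d(1)=1$, since a graph of clique number $1$ is edgeless and hence $1$-colorable. (That $f_d$ is finite at all is already recorded in the excerpt via Theorem~\ref{thm:tww-mm} together with~\cite{tww3}, and $f_d$ is nondecreasing, a fact I will use freely below since it is a maximum over a class closed under induced subgraphs.) These observations match the first two bullets of Lemma~\ref{lem:recursion}.

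Next I would verify the third hypothesis, i.e.\ inequality~\eqref{eq:bobr}, for each $d\ge 3$. The key move is to instantiate Lemma~\ref{lem:main} with the specific value $k=\lfloor \omega/8\rfloor$; this is legitimate whenever $\omega\ge 8$, because then $1\le k\le \omega/8<\omega/4$, and also $\omega\ge 5$ and $d\ge 3$. Substituting this $k$ into~\eqref{eq:main} and noting that $\omega-k=\lceil 7\omega/8\rceil$ yields an upper bound on $f_d(\omega)$ of the shape ``$f_d(\lceil 7\omega/8\rceil)$ plus $C_d$ times a bracket'', the bracket containing $f_d(\lceil7\omega/8\rceil)$ together with $8C_df_{d-1}(2\omega^{d-1})^2$ multiplied by a sum over $u$ from $0$ to $\lfloor\log(\omega/8)\rfloor$ of $f_d(2^{u+1})\cdot f_d(\tfrac{2k}{2^u}+1)$. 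From here it is bookkeeping: one checks that $\lfloor\log(\omega/8)\rfloor=\lfloor\log\omega\rfloor-3$, that $\tfrac{2k}{2^u}+1\le \lceil\omega/2^{u+1}\rceil+1$ (using $k\le \omega/8$ and monotonicity of the ceiling), and that the constant factors can all be absorbed into a single $\alpha_d$; concretely $\alpha_d=8C_d(C_d+1)$ works, with $C_d$ the constant supplied by Lemma~\ref{lem:main}. Using monotonicity of $f_d$ to pass from $\lceil\omega/2^{u+2}\rceil+1$ to $\lceil\omega/2^{u+1}\rceil+1$ where needed, this produces exactly~\eqref{eq:bobr}.

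With all three hypotheses of Lemma~\ref{lem:recursion} in place, applying that lemma gives constants $\beta_d$ with $f_d(\omega)\le 2^{\beta_d\log^{d-1}\omega}$ for all $\omega\in\Z_{>0}$ and all $d\ge 2$, which is~\eqref{eq:wydra}; unwinding the definition of $f_d$ then gives the stated bound $\chi(G)\le 2^{\beta_d\log^{d-1}\omega}$ for every $G$ of clique number $\omega$ admitting a $d$-almost mixed-free adjacency matrix. The only genuinely delicate point in this argument is the translation of~\eqref{eq:main} into~\eqref{eq:bobr}: one has to make sure that the particular choice $k=\lfloor\omega/8\rfloor$ makes the summation ranges and the arguments of $f_d$ coincide with what Lemma~\ref{lem:recursion} demands, and that every floor/ceiling discrepancy is loosened in the direction that preserves the inequality. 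Everything beyond that is either routine arithmetic or an appeal to a lemma already proved in the excerpt.
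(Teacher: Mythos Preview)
Your proposal is correct and follows essentially the same route as the paper: establish $f_2(\omega)\le\omega$ via Lemma~\ref{lem:2cograph} and perfection of cographs, note $f_d(1)=1$, instantiate Lemma~\ref{lem:main} with $k=\lfloor\omega/8\rfloor$ and massage the resulting bound into the form~\eqref{eq:bobr} with $\alpha_d=8C_d(C_d+1)$, then invoke Lemma~\ref{lem:recursion}. The paper does exactly this, including the intermediate step through $\lceil\omega/2^{u+2}\rceil+1$ that you mention.
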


Theorem~\ref{thm:main} now follows from combining Theorem~\ref{thm:main-mm} with Lemma~\ref{lem:tww-f}.

\paragraph*{Acknowledgements.} We are grateful to Jakub Gajarsk\'y and Colin Geniet for many insightful discussions about the results presented in this article.

\bibliographystyle{plain}
\bibliography{references}

\end{document}